\numberwithin{equation}{section}
\renewcommand{\@biblabel}[1]{#1\hfill \hspace{-0.2cm}}
\newcommand{\R}{\mbox{${\rm{I\!R}}$}}
\newcommand{\Q}{\mbox{${\rm{I\!P}}$}}
\newtheorem{theorem}{Theorem}
\newtheorem{proposition}[theorem]{Proposition}
\newtheorem{definition}{Definition}
\begin{document}

\title{Total positivity and dependence of order statistics}

\author{%
  Enrique de Amo\affil{1},
  Jos\'e Juan Quesada-Molina\affil{2}
  and
  Manuel \'Ubeda-Flores\affil{1,}\corrauth
}

\shortauthors{the Author(s)}

\address{%
  \addr{\affilnum{1}}{Department of Mathematics, University of Almer\'ia, 04120 Almer\'ia, Spain}
  \addr{\affilnum{2}}{Department of Applied Mathematics, University of Granada, 18071 Granada, Spain}}

\corraddr{Email: mubeda@ual.es; Tel: +34950214722.\\
}

\begin{abstract}
In this comprehensive study, we delve deeply into the concept of multivariate total positivity, defining it in accordance with a direction. We rigorously explore numerous salient properties, shedding light on the nuances that characterize this notion. Furthermore, our research extends to establishing distinct forms of dependence among the order statistics of a sample from a distribution function. Our analysis aims to provide a nuanced understanding of the interrelationships within multivariate total positivity and its implications for statistical analysis and probability theory.
\end{abstract}

\keywords{density function; dependence concept; failure rate; order statistic; random variable; total positivity
\newline
\textbf{Mathematics Subject Classification:} 60E15, 62G30}

\maketitle

\section{Introduction}\label{sec:intro}
There are different ways to discuss dependence relations among random variables and, as Jogdeo \cite{Jogdeo1982} notes: ``...this is one of the most widely studied objects in probability and statistics."

Recent literature extensively studied the concept of dependence in bivariate and multivariate settings. These concepts are particularly relevant in fields such as economics, insurance, finance, risk management, applied probability and statistics (see, e.g., \cite{Cizek2005}). Several definitions of positive dependence have been introduced to model the association between large values of a component of a multivariate random vector and large values of the other components ---further discussion of most of the dependence concepts that we present in this paper can be consulted in \cite{Barlow1981,Joe1997,Karlin1968,Lee1985,Lehmann1966,Shaked1977,Tong1980}--- including multivariate total positivity of order 2 (MTP$_2$) ---also known as positive likelihood ratio dependence for the bivariate case. This concept has garnered significant attention, particularly in Gaussian models, owing to its intuitive description that highlights the non-negativity of all correlations and partial correlations. In finance, psychometrics, machine learning, medical statistics and phylogenetics, MTP$_2$ models have been shown to outperform state-of-the-art methods; moreover, there is a fundamental connection between the MTP$_2$ constraint and the assumption of sparsity ---see, e.g., \cite{Karlin1983,Slawski2015}.

However, not all dependence concepts, especially in the multivariate case, encompass all dependencies between random variables. In particular, the above mentioned MTP$_2$ concept is defined for the case when the random vector is $(X_1,X_2,\ldots,X_n)$, but not when at least one of the variables is negative: For instance, a random vector of type $(-X_1,X_2,\ldots,X_n)$. Thus, we intend to extend the concept to random variables that follow, for example, the guidelines of the random vector presented.

On the other hand, the $i$-th order statistic of a sample from a distribution functions is equal to its $i$-th smallest value. Together with rank statistics, order statistics are among the most fundamental tools in non-parametric statistics and inference \cite{David2003}. We establish certain types of dependence ---both for some of those previously defined and some well-known that we will recall--- for order statistics.

The paper is organized as follows. In Section \ref{sec:pre}, we provide the major definitions. In Section \ref{sec:direction}, we concentrate on the notion of multivariate total positivity according to a direction and provide several properties. In Section \ref{sec:order}, we establish certain types of dependence for order statistics. Finally, Section \ref{sec:con} is devoted to conclusions.

\section{Preliminaries}\label{sec:pre}
Let $d\ge 2$ be a natural number. Let $(\Omega,\cal{F},\Q)$  be a probability space, where $\Omega$ is a nonempty set, $\cal{F}$ is a $\sigma$-algebra of subsets of $\Omega$, and $\Q$ is a probability measure on $\cal{F}$, and let ${\bf X}=(X_1,\ldots,X_d)$ be a vector of independent and identically distributed (i.i.d. for short) random variables from $\Omega$ to $\overline{\R}^d=[-\infty,\infty]^d$.

A function $f\colon \overline{\mathbb{R}}^2\longrightarrow [0,+\infty[$ is {\it totally positive of order two} ---denoted by TP$_2$--- if
$$f(x',y')f(x,y)\ge f(x',y)f(x,y')$$
whenever $x\le x'$ and $y\le y'$ \cite{Karlin1968}.

Two random variables $X$ and $Y$ are said to be totally positive of order two ---or {\it positively likelihood ratio dependent}, denoted by ${\rm PLRD}(X,Y)$--- if the density function of the pair $\left(X,Y\right)$ is TP$_2$.

In the multivariate case, a generalization of total positivity of order two can be defined \cite{Karlin1980}. A function $f\colon \overline{\mathbb{R}}^d\longrightarrow [0,+\infty[$ is {\it multivariate totally positive of order two} ---denoted by MTP$_2$--- if
$$f({\bf x}\vee{\bf y})f({\bf x}\wedge{\bf y})\ge f({\bf x})f({\bf y})$$
for all ${\bf x}=\left(x_1,\ldots,x_d\right),{\bf y}=\left(y_1,\ldots,y_d\right)\in \overline{\mathbb{R}}^d$, where
\begin{eqnarray*}
{\bf x}\vee{\bf y}&=&\left(\max\left(x_1,y_1\right),\ldots,\max\left(x_d,y_d\right)\right),\\
{\bf x}\wedge{\bf y}&=&\left(\min\left(x_1,y_1\right),\ldots,\min\left(x_d,y_d\right)\right).
\end{eqnarray*}

A random vector ${\bf X}=\left(X_1,\ldots,X_d\right)$ is said to be multivariate totally positive of order two ---or {\it multivariate positively likelihood ratio dependent}--- if its joint $d$-dimensional density $f$ is MTP$_2$. See \cite{Marshall1990} for examples and \cite{Karlin1981,Propp1996} for applications.

We note that by reversing the sense of the above inequalities, we have the corresponding negative concepts, obtaining similar results which we omit here.

In the next sections, when we talk about these ---or other--- dependence concepts, we will refer to random variables or to their joint density functions.

\section{Multivariate total positivity according to a direction}\label{sec:direction}
In this section, we provide a simple generalization of the TP$_2$ concept in higher dimensions in a directional sense, in which a pair of the components of the random vector can be negative. After giving some simple properties of this concept, we provide a ``natural'' generalization of the MTP$_2$ concept according to a direction ---i.e., the components of the random vector can take negative values--- and show that the two newly defined concepts are equivalent. Additional characterizations and properties are given throughout the section.

The next definition generalizes the concept of TP$_2$ to $d$-dimensions according to a direction.

\begin{definition}\label{def:pairs}Let ${\bf X}$ be a $d$-dimensional random vector with joint density $f$, and let $\alpha=\left(\alpha_1,\ldots,\alpha_d\right)\in\mathbb{R}^d$ such that $\left|\alpha_i\right|=1$ for all $i=1,\ldots,d$. The function $f$ is said to be {\it multivariate totally positive of order two in pairs according to the direction $\alpha$} ---denoted by MTPP$_2(\alpha)$--- if
\begin{eqnarray}\nonumber&&f\left(x_1,\ldots,\alpha_i x_i,\ldots,\alpha_j x_j,\ldots,x_d\right)f\left(x_1,\ldots,\alpha_i x_i',\ldots,\alpha_j x_j',\ldots,x_d\right)\\
&&\quad \ge f\left(x_1,\ldots,\alpha_i x_i',\ldots,\alpha_j x_j,\ldots,x_d\right)f\left(x_1,\ldots,\alpha_i x_i,\ldots,\alpha_j x_j',\ldots,x_d\right)\label{eq:pairs}
\end{eqnarray}
for all $\left(x_1,\ldots,x_d,x_i',x_j'\right)\in{\overline{\mathbb{R}}}^{d+2}$ such that $x_i\le x_i'$ and $x_j\le x_j'$ and any election of $(i,j)$.
\end{definition}

Note that if a random pair $(X_1,X_2)$ is TP$_2$ then it is MTPP$_2(1,1)$.

The dependency in MTPP$_2(\alpha)$, whichever is $\alpha$, is positive since the fact that a $d$-dimensional random vector ${\bf X}$ is MTPP$_2(\alpha)$ indicates that large values of the random variables $X_j$, with $j\in J$, correspond with small values of the variables $X_j$, with $j\in I\backslash J$, where $I=\{1,\ldots,d\}$ and $J=\left\{i\in I: \alpha_i>0\right\}$. In addition, there is also association between small values of the variables $X_j$, with $j\in J$, and large values of $X_j$, with $j\in I\backslash J$, as the following result shows.

\begin{proposition}A $d$-dimensional random vector ${\bf X}$ is MTPP$_2(\alpha)$ if, and only if, it is MTPP$_2(-\alpha)$.
\end{proposition}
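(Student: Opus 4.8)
The plan is to prove only the implication ``MTPP$_2(\alpha)\Rightarrow$ MTPP$_2(-\alpha)$''; the reverse implication then comes for free by applying this same implication to the direction $-\alpha$ and using $-(-\alpha)=\alpha$. Thus the whole proposition reduces to a single directional statement, which I would establish by an order-reversing change of variables in the two distinguished coordinates.

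First I would fix an arbitrary pair of indices $(i,j)$ and fix the values of all remaining coordinates $x_k$, $k\neq i,j$; these enter both defining inequalities unchanged, since in the ``in pairs'' definition only the $i$-th and $j$-th arguments carry the signs $\alpha_i,\alpha_j$. The substitution I would use is the reflection
$$ x_i\longmapsto -x_i',\qquad x_i'\longmapsto -x_i,\qquad x_j\longmapsto -x_j',\qquad x_j'\longmapsto -x_j. $$
Because $x\mapsto -x$ reverses inequalities, the constraint $x_i\le x_i'$ is preserved (it becomes $-x_i'\le -x_i$), and likewise for $j$; moreover this map is a bijection of the admissible region $\{x_i\le x_i',\ x_j\le x_j'\}$ onto itself, so checking the target inequality after the substitution is equivalent to checking it for all admissible arguments.

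The key computation is then to track how each of the four factors in the MTPP$_2(-\alpha)$ inequality transforms. Since $(-\alpha_i)(-x_i')=\alpha_i x_i'$ and $(-\alpha_i)(-x_i)=\alpha_i x_i$, and similarly for $j$, the substitution turns every argument $(-\alpha_i)(\cdot)$ back into $\alpha_i(\cdot)$ while simultaneously interchanging the primed and unprimed entries. Writing out the four factors, the ``diagonal'' product on the left of the $-\alpha$ inequality becomes the diagonal product $f(\ldots,\alpha_i x_i',\ldots)f(\ldots,\alpha_i x_i,\ldots)$, and the ``cross'' product on the right transforms into the corresponding cross product for $\alpha$. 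Up to the commutativity of multiplication, the MTPP$_2(-\alpha)$ inequality is therefore literally identical to the MTPP$_2(\alpha)$ inequality, which completes the argument.

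I do not anticipate a serious obstacle: the argument is essentially bookkeeping resting on the invariance of a $2\times 2$ total-positivity determinant under reflecting both coordinates. The one point requiring care is the matching of factors after the reflection ---one must verify that the off-diagonal terms of the $-\alpha$ inequality map precisely to the off-diagonal terms of the $\alpha$ inequality rather than to the diagonal ones--- so I would write out all four arguments explicitly to confirm the correspondence before invoking commutativity.
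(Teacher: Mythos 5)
Your proof is correct and follows essentially the same route as the paper: the paper's argument is exactly the reflection substitution $x_i=-y_i'$, $x_i'=-y_i$, $x_j=-y_j'$, $x_j'=-y_j$ in the two distinguished coordinates, with the observation that the ordering constraints are preserved. Your only (harmless) refinement is deducing the converse by applying the forward implication to $-\alpha$ rather than repeating the substitution.
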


\begin{proof}Assume ${\bf X}$ is MTPP$_2(\alpha)$, then \eqref{eq:pairs} holds. By making the changes $x_i=-y_i'$, $x_j=-y_j'$, $x_i'=-y_i$ and $x_j'=-y_j$ ---note that $y_i\le y_i'$ and $y_j\le y_j'$--- we easily obtain that ${\bf X}$ is also MTPP$_2(-\alpha)$.

The converse follows the same steps.
\end{proof}

The proof of the next property concerning the MTPP$_2(\alpha)$ concept ---in which {\bf 1} denotes the vector $(1,1,\ldots,1)\in\mathbb{R}^d$--- is simple, and we omit it.

\begin{proposition}\label{pro:alfa}A $d$-dimensional random vector ${\bf X}$ is MTPP$_2(\alpha)$ if, and only if, $\alpha{\bf X}$ is MTPP$_2({\bf 1})$.
\end{proposition}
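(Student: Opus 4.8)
The plan is to unravel the definition of MTPP$_2(\alpha)$ for $\mathbf{X}$ and compare it, term by term, with the definition of MTPP$_2(\mathbf{1})$ applied to the transformed vector $\alpha\mathbf{X}=(\alpha_1 X_1,\ldots,\alpha_d X_d)$. The guiding observation is that the direction vector $\alpha$ in Definition \ref{def:pairs} enters the inequality \eqref{eq:pairs} exactly through the substitutions $x_i\mapsto \alpha_i x_i$ and $x_j\mapsto \alpha_j x_j$ in the arguments of $f$. If $g$ denotes the joint density of $\alpha\mathbf{X}$, then since each $\alpha_k\in\{-1,1\}$ is its own inverse, the change-of-variables formula gives $g(z_1,\ldots,z_d)=f(\alpha_1 z_1,\ldots,\alpha_d z_d)$ (the Jacobian of $z\mapsto \alpha z$ has absolute value $1$). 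So the density $g$ of $\alpha\mathbf{X}$ already has the directional sign changes baked into it.

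First I would fix an arbitrary pair of indices $(i,j)$ and write out the MTPP$_2(\mathbf{1})$ inequality for $g$ at a generic point. Because the direction for $g$ is $\mathbf{1}$, no sign changes appear inside the arguments, so the inequality reads
\begin{eqnarray*}
&&g\left(z_1,\ldots,z_i,\ldots,z_j,\ldots,z_d\right)g\left(z_1,\ldots,z_i',\ldots,z_j',\ldots,z_d\right)\\
&&\quad \ge g\left(z_1,\ldots,z_i',\ldots,z_j,\ldots,z_d\right)g\left(z_1,\ldots,z_i,\ldots,z_j',\ldots,z_d\right)
\end{eqnarray*}
for $z_i\le z_i'$ and $z_j\le z_j'$. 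Substituting $g=f\circ(\alpha\,\cdot\,)$ turns every occurrence of $z_k$ into $\alpha_k z_k$, which is precisely the form of \eqref{eq:pairs} with $x_k$ in place of $z_k$. The only subtlety to handle carefully is that the monotone direction of the inequality variables can flip: when $\alpha_i=-1$, the condition $z_i\le z_i'$ corresponds to $\alpha_i z_i\ge \alpha_i z_i'$, so the roles of the two arguments in the $i$-th slot are interchanged. I would check that this reordering affects both sides of the inequality symmetrically and therefore leaves the inequality equivalent rather than reversed, which is exactly why the statement is an ``if and only if'' with no sign issues.

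The argument is genuinely an equivalence in both directions, so after establishing that \eqref{eq:pairs} for $f$ with direction $\alpha$ is the same assertion as the plain MTPP$_2(\mathbf{1})$ inequality for $g$, the converse is automatic by reading the chain of substitutions backwards (using $\alpha_k^2=1$ once more). The main obstacle, such as it is, is purely bookkeeping: keeping track of which arguments correspond to the ``small'' and ``large'' values once the sign $\alpha_i$ or $\alpha_j$ is negative, and confirming that the factors on the left and right of the inequality are matched up correctly rather than swapped across the inequality sign. Once that correspondence is verified for an arbitrary admissible pair $(i,j)$, the proposition follows, which is presumably why the authors regard it as simple enough to omit.
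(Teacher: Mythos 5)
Your argument is correct and is evidently the intended one: the paper omits the proof of Proposition \ref{pro:alfa} as ``simple,'' and the direct verification via the density identity $g(z_1,\ldots,z_d)=f(\alpha_1 z_1,\ldots,\alpha_d z_d)$ is exactly what makes it so. One small remark: the sign-flip ``subtlety'' you set aside to check is in fact vacuous here, since Definition \ref{def:pairs} imposes the orderings on the pre-sign variables $x_i\le x_i'$, $x_j\le x_j'$ (not on $\alpha_i x_i$, $\alpha_j x_j$), so under the substitution $x_i=z_i$, $x_j=z_j$ and $x_k=\alpha_k z_k$ for $k\neq i,j$ the two inequalities are literally identical term by term, with no interchange of arguments to track.
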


In the following definition we provide a generalization of the MTP$_2$-concept, similarly to the generalization defined for the orthant dependence given in \cite{QueUb2012}, and where $\alpha {\bf z}$ will denote the $d$-dimensional vector $\left(\alpha_1 z_1,\ldots,\alpha_d z_d\right)$.

\begin{definition}Let ${\bf X}$ be a $d$-dimensional random vector with joint density function $f$, and $\alpha\in{\overline{\mathbb{R}}}^d$, with $\left|\alpha_i\right|=1$ for all $i=1,\ldots,d$. Then ${\bf X}$ is said to be {\it multivariate totally positive of order two according to the direction $\alpha$} ---denoted by MTP$_2(\alpha)$--- if
$$f(\alpha({\bf x}\vee{\bf y}))f(\alpha({\bf x}\wedge{\bf y)})\ge f(\alpha{\bf x})f(\alpha{\bf y})$$
for all ${\bf x}=\left(x_1,\ldots,x_d\right),{\bf y}=\left(y_1,\ldots,y_d\right)\in \overline{\mathbb{R}}^d$.
\end{definition}

In this section, we have defined two multivariate generalizations of the TP$_2$-concept. Now, we prove that both concepts, MTPP$_2(\alpha)$ and MTP$_2(\alpha)$, are equivalent, as the next result shows.

\begin{theorem}\label{pro:charac}A $d$-dimensional random vector ${\bf X}$ is MTP$_2(\alpha)$ if, and only if, it is MTPP$_2(\alpha)$.
\end{theorem}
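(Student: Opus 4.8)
The plan is to first reduce to the reference direction $\alpha=\mathbf{1}$, and then to recognize the resulting statement as the classical equivalence between the meet--join (lattice) form of total positivity and its pairwise form, proving the two implications separately.

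\textbf{Reduction.} Since each $|\alpha_i|=1$, the map $\mathbf{z}\mapsto\alpha\mathbf{z}$ is an involution of $\overline{\mathbb{R}}^d$ whose Jacobian has absolute value $1$, so the random vector $\alpha\mathbf{X}$ has density $g(\mathbf{z}):=f(\alpha\mathbf{z})$. Reading the definition of MTP$_2(\alpha)$ through $g$ shows immediately that $\mathbf{X}$ is MTP$_2(\alpha)$ if, and only if, $\alpha\mathbf{X}$ is MTP$_2(\mathbf{1})$; combined with Proposition \ref{pro:alfa}, which gives that $\mathbf{X}$ is MTPP$_2(\alpha)$ if, and only if, $\alpha\mathbf{X}$ is MTPP$_2(\mathbf{1})$, the theorem reduces to the case $\alpha=\mathbf{1}$. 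Hence it suffices to prove, for an arbitrary nonnegative density $g$ on $\overline{\mathbb{R}}^d$, that $g$ is MTP$_2(\mathbf{1})$ if, and only if, it is MTPP$_2(\mathbf{1})$.

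\textbf{Easy implication.} Assume $g$ is MTP$_2(\mathbf{1})$. Given indices $i,j$, a common background $(x_k)_{k\ne i,j}$, and values $x_i\le x_i'$, $x_j\le x_j'$, I apply the meet--join inequality to the two vectors carrying $(x_i',x_j)$ and $(x_i,x_j')$ in coordinates $(i,j)$ and agreeing with the background elsewhere. Their join and meet carry $(x_i',x_j')$ and $(x_i,x_j)$, respectively, so the inequality becomes exactly \eqref{eq:pairs} with $\alpha=\mathbf{1}$; thus $g$ is MTPP$_2(\mathbf{1})$.

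\textbf{Hard implication.} Assume $g$ is MTPP$_2(\mathbf{1})$ and prove $g(\mathbf{x})g(\mathbf{y})\le g(\mathbf{x}\wedge\mathbf{y})g(\mathbf{x}\vee\mathbf{y})$, where we may take $\mathbf{x},\mathbf{y}$ incomparable (the comparable case being an equality). I would argue by induction on $d$, conditioning on the last coordinate. Relabelling so that $x_d\le y_d$, write $\mathbf{x}=(\bar{\mathbf{x}},x_d)$ and $\mathbf{y}=(\bar{\mathbf{y}},y_d)$, so that $\mathbf{x}\wedge\mathbf{y}=(\bar{\mathbf{x}}\wedge\bar{\mathbf{y}},x_d)$ and $\mathbf{x}\vee\mathbf{y}=(\bar{\mathbf{x}}\vee\bar{\mathbf{y}},y_d)$. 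Applying the inductive hypothesis to the section $g(\cdot,x_d)$ on $\overline{\mathbb{R}}^{d-1}$, which inherits the pairwise property, disposes of the first $d-1$ coordinates, after which it remains to establish an auxiliary inequality of the type
$$g(\bar{\mathbf{x}}\vee\bar{\mathbf{y}},x_d)\,g(\bar{\mathbf{y}},y_d)\le g(\bar{\mathbf{y}},x_d)\,g(\bar{\mathbf{x}}\vee\bar{\mathbf{y}},y_d),$$
coupling the ``high'' block $\bar{\mathbf{x}}\vee\bar{\mathbf{y}}$ against the single ascending coordinate $x_d\le y_d$. This auxiliary inequality I would prove by a secondary induction on the number of coordinates in which $\bar{\mathbf{x}}\vee\bar{\mathbf{y}}$ strictly exceeds $\bar{\mathbf{y}}$, peeling them off one at a time and pairing each with the last coordinate through a genuine application of \eqref{eq:pairs}; multiplying the resulting chain and cancelling the repeated factors yields the claim. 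Combining the two pieces and cancelling the common factors gives the meet--join inequality. (Where a factor of $g$ vanishes, the inequality is verified directly, or one argues on the support and passes to the limit.)

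\textbf{Main obstacle.} The difficulty is structural: inequality \eqref{eq:pairs} only compares four points that share a common value in all but two coordinates, whereas $\mathbf{x}$ and $\mathbf{y}$ may differ in every coordinate, so the pairwise hypothesis can never be invoked on them directly. The real work lies in interposing auxiliary vectors---each differing from its neighbour in exactly two coordinates---and in organizing the double induction so that the product of the pairwise inequalities telescopes precisely onto $g(\mathbf{x})g(\mathbf{y})\le g(\mathbf{x}\wedge\mathbf{y})g(\mathbf{x}\vee\mathbf{y})$. Keeping the bookkeeping of the meets, joins, and cancelling factors consistent throughout is the delicate point.
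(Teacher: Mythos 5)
Your argument is correct, but it organizes the hard implication differently from the paper. The paper proves MTPP$_2(\alpha)\Rightarrow$ MTP$_2(\alpha)$ in one shot: after sorting the coordinates into the $r$ positions where $x_l\ge y_l$ and the $s=d-r$ positions where $x_l\le y_l$, it builds an explicit two-parameter grid of interpolating vectors ${\bf z}^{i,j}$ ($0\le i\le r$, $0\le j\le s$) with corners ${\bf x}\wedge{\bf y}$, ${\bf x}$, ${\bf y}$, ${\bf x}\vee{\bf y}$, applies \eqref{eq:pairs} to each elementary cell (whose two ``new'' vectors differ in exactly two coordinates), and multiplies all $rs$ inequalities so that every interior factor cancels and only the four corners survive. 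You instead run an induction on the dimension: split off the last coordinate, invoke the $(d-1)$-dimensional statement on the section $g(\cdot,x_d)$, and close the gap with a one-dimensional chain that peels off the coordinates where $\bar{\bf x}\vee\bar{\bf y}$ exceeds $\bar{\bf y}$ one at a time against the ascending last coordinate. The combinatorial content is the same (both interpose vectors differing pairwise in two coordinates and telescope a product of instances of \eqref{eq:pairs}), but the paper's grid gives a shorter, non-inductive, fully explicit bookkeeping, while your double induction isolates a reusable auxiliary inequality (a ``block versus single coordinate'' TP$_2$ statement) and makes the logical structure more transparent. Two small points: the induction must be phrased for arbitrary nonnegative functions rather than for densities of random vectors, since the section $g(\cdot,x_d)$ need not integrate to one --- your reduction paragraph already gestures at this, and it costs nothing; and the vanishing-factor issue you flag in parentheses is genuine but is equally present in the paper's cancellation step, so you are not worse off --- a fully rigorous treatment in either version must argue separately when an intermediate factor is zero.
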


\begin{proof}Consider the vectors
$${\bf x}=\left(x_1,\ldots,x_i',\ldots,x_j,\ldots,x_d\right)\quad{\rm and}\quad {\bf y}=\left(x_1,\ldots,x_i,\ldots,x_j',\ldots,x_d\right),$$
with $x_i\le x_i'$ and $x_j\le x_j'$. Then we have
\begin{eqnarray*}
\alpha\left({\bf x}\wedge {\bf y}\right)&=&\left(\alpha_1 x_1,\ldots,\alpha_i x_i,\ldots,\alpha_j x_j,\ldots,\alpha_d x_d\right)\\
\alpha\left({\bf x}\vee {\bf y}\right)&=&\left(\alpha_1 x_1,\ldots,\alpha_i x_i',\ldots,\alpha_j x_j',\ldots,\alpha_d x_d\right).
\end{eqnarray*}
If ${\bf X}$ is MTP$_2(\alpha)$, then we immediately obtain that ${\bf X}$ is MTPP$_2(\alpha)$.

Conversely, for ${\bf x},{\bf y}\in{\overline{\mathbb{R}}}^d$ suppose, without loss of generality, $x_l\ge y_l$ for $l=1,\ldots,r$ and $x_l\le y_l$ for $l=r+1,\ldots,d$. Let $s=d-r$. For each $i$, with $0\le i\le r$, and for each $j$, with $0\le j\le s$, we define the vectors
\begin{eqnarray*}
{\bf z}^{i,j}&:=&(x_1\vee y_1,\ldots,x_i\vee y_i,x_{i+1}\wedge y_{i+1},\ldots,x_r\wedge y_r,x_{r+1}\vee y_{r+1},\ldots,x_{r+j}\vee y_{r+j},\\
&&x_{r+j+1}\wedge y_{r+j+1},\ldots,x_d\wedge y_d)
\end{eqnarray*}
(note ${\bf z}^{r,0}={\bf x}$, ${\bf z}^{0,s}={\bf y}$, ${\bf z}^{0,0}={\bf x}\wedge{\bf y}$ and ${\bf z}^{r,s}={\bf x}\vee{\bf y}$). Then we have ${\bf z}^{i+1,j}\wedge{\bf z}^{i,j+1}={\bf z}^{i,j}$ and ${\bf z}^{i+1,j}\vee{\bf z}^{i,j+1}={\bf z}^{i+1,j+1}$. Since ${\bf X}$ is MTPP$_2(\alpha)$, if $h$ is the joint density function of ${\bf X}$, we obtain
$$h\left(\alpha {\bf z}^{i,j}\right)h\left(\alpha {\bf z}^{i+1,j+1}\right)\ge h\left(\alpha {\bf z}^{i+1,j}\right)h\left(\alpha {\bf z}^{i,j+1}\right)$$
for any $(i,j)$ such that $0\le i\le r-1$ and $0\le j\le s-1$; therefore,
$$\prod_{i=0}^{r-1}\prod_{j=0}^{s-1}h\left(\alpha {\bf z}^{i,j}\right)h\left(\alpha {\bf z}^{i+1,j+1}\right)\ge \prod_{i=0}^{r-1}\prod_{j=0}^{s-1}h\left(\alpha {\bf z}^{i+1,j}\right)h\left(\alpha {\bf z}^{i,j+1}\right),$$
whence
$$h\left(\alpha{\bf z}^{0,0}\right)h\left(\alpha{\bf z}^{r,s}\right)\ge h\left(\alpha{\bf z}^{r,0}\right)h\left(\alpha{\bf z}^{0,s}\right),$$
i.e., ${\bf X}$ is MTP$_2(\alpha)$, which completes the proof.
\end{proof}

Another different characterization of the MTP$_2(\alpha)$-concept is given in the following result.

\begin{proposition}\label{pro:charden}A $d$-dimensional random vector ${\bf X}$ with joint density function $h$ is MTP$_2(\alpha)$ if, and only if, for any pair of vectors ${\bf x},{\bf x}'\in{\overline{\mathbb{R}}}^d$ such that $x_i\le x_i'$ for all $i=1,\ldots,d$, and any $1\le j\le d-1$, we have
\begin{equation}\label{eq:desi1}
h\left(\alpha{\bf x}\right)h\left(\alpha{\bf x'}\right)\ge h\left(\alpha{\bf x}'^{j}\right)h\left(\alpha{\bf x}^j\right),
\end{equation}
where ${\bf x}'^{j}=\left(x_1',\ldots,x_j',x_{j+1},\ldots,x_d\right)$ and ${\bf x}^{j}=\left(x_1,\ldots,x_j,x_{j+1}',\ldots,x_d'\right)$.
\end{proposition}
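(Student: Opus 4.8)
The plan is to prove both implications by observing that \eqref{eq:desi1} is nothing but the defining MTP$_2(\alpha)$ inequality evaluated at one carefully chosen pair of vectors, and conversely that all the pairwise instances needed for MTPP$_2(\alpha)$ already sit inside the family \eqref{eq:desi1}; Theorem \ref{pro:charac} then closes the loop between the two equivalent concepts.

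For the forward implication I would fix vectors with $x_i\le x_i'$ for all $i$ together with an index $j$, and record the coordinatewise identities ${\mathbf{x}'}^{j}\vee\mathbf{x}^{j}=\mathbf{x}'$ and ${\mathbf{x}'}^{j}\wedge\mathbf{x}^{j}=\mathbf{x}$. These hold because in positions $1,\dots,j$ the vector ${\mathbf{x}'}^{j}$ carries the larger entry $x_l'$ and $\mathbf{x}^{j}$ the smaller entry $x_l$, while in positions $j+1,\dots,d$ these roles swap; hence in every coordinate the maximum equals $x_l'$ and the minimum equals $x_l$. Feeding the pair $({\mathbf{x}'}^{j},\mathbf{x}^{j})$ into the MTP$_2(\alpha)$ definition then yields $h(\alpha\mathbf{x}')h(\alpha\mathbf{x})\ge h(\alpha{\mathbf{x}'}^{j})h(\alpha\mathbf{x}^{j})$, which is precisely \eqref{eq:desi1}.

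For the converse I would deduce that $\mathbf{X}$ is MTPP$_2(\alpha)$ from \eqref{eq:desi1} and then invoke Theorem \ref{pro:charac}. Given an arbitrary pair of indices $i<j$ and values $x_i\le x_i'$, $x_j\le x_j'$ with the remaining coordinates held fixed, I would apply \eqref{eq:desi1} with split index $k=i$ (admissible since $1\le i\le d-1$) to the two full vectors that coincide in every position except $i$ and $j$, one carrying $(x_i,x_j)$ and the other $(x_i',x_j')$. Because $i\le k<j$, the two mixed vectors ${\mathbf{x}'}^{k}$ and $\mathbf{x}^{k}$ produced by the statement are exactly the cross terms (coordinate $i$ high, coordinate $j$ low; and coordinate $i$ low, coordinate $j$ high), so \eqref{eq:desi1} reduces verbatim to the pairwise inequality \eqref{eq:pairs}. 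As $(i,j)$ was arbitrary, $\mathbf{X}$ is MTPP$_2(\alpha)$, and Theorem \ref{pro:charac} gives MTP$_2(\alpha)$.

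The only delicate point is the bookkeeping of the direction $\alpha$ and of the split index: one must check that each factor $\alpha_l$ multiplies the same frozen value in all four terms, so that the fixed coordinates may be renamed $\alpha_l x_l\mapsto x_l$ to match the literal form of \eqref{eq:pairs}, and that the choice $k=i$ genuinely places the two active coordinates on opposite sides of the split rather than collapsing \eqref{eq:desi1} into the trivial identity $h(\alpha\mathbf{x})h(\alpha\mathbf{x}')\ge h(\alpha\mathbf{x}')h(\alpha\mathbf{x})$. I expect this coordinate tracking, rather than any genuine analytic difficulty, to be the crux of the argument.
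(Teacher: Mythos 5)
Your proposal is correct and follows essentially the same route as the paper: the forward direction uses the identities ${\bf x}'^{j}\vee{\bf x}^{j}={\bf x}'$ and ${\bf x}'^{j}\wedge{\bf x}^{j}={\bf x}$, and the converse specializes \eqref{eq:desi1} to vectors agreeing outside two coordinates to recover \eqref{eq:pairs} and then invokes Theorem \ref{pro:charac}. The only cosmetic difference is your choice of split index $k=i$ where the paper takes $k=\max\{i,j\}-1$; any $k$ with $i\le k<j$ works, as you note.
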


\begin{proof}Assume ${\bf X}$ is MTP$_2(\alpha)$. Let ${\bf x},{\bf x}'\in{\overline{\mathbb{R}}}^d$ such that $x_i\le x_i'$ for all $i=1,\ldots,d$. For any $1\le j\le d-1$ consider the vectors ${\bf x}'^{j}$ and ${\bf x}^j$. Then, it is clear ${\bf x}'^{j}\wedge {\bf x}^j={\bf x}$ and ${\bf x}'^{j}\vee {\bf x}^j={\bf x}'$, whence we obtain \eqref{eq:desi1}.

Conversely, given $i,j\in I=\{1,\ldots,d\}$, let $k=\max\{i,j\}$. Then, for any ${\bf x},{\bf x}'\in{\overline{\mathbb{R}}}^d$ such that $x_l\le x_l'$ for all $l=1,\ldots,d$, from \eqref{eq:desi1} we have
$$h\left(\alpha{\bf x}\right)h\left(\alpha{\bf x'}\right)\ge h\left(\alpha{\bf x}'^{k-1}\right)h\left(\alpha{\bf x}^{k-1}\right).$$
Taking $x_l=x_l'$ for all $l\in I\backslash\{i,j\}$ we easily obtain that ${\bf X}$ is MTPP$_2(\alpha)$, and therefore it is MTP$_2(\alpha)$, completing the proof.
\end{proof}

For the next result, in which we provide another characterization of the MTPP$_2(\alpha)$-concept of a random vector, we need some additional notation: Given a $d$-dimensional random vector ${\bf X}$ with joint distribution function $H$, let $\overline{H}\left(x_1,\ldots,x_d\right)$ denote the probability that ${\bf X}$ is greater than ${\bf x}$ ---also known as the {\it joint survival function of} $H$---, i.e.,
$${\overline{H}}\left(x_1,\ldots,x_d\right)=\Q\left[\bigcap_{i=1}^{d}\left( X_i>x_i\right)\right].$$

\begin{proposition}A $d$-dimensional random vector ${\bf X}$ is MTPP$_2(\alpha)$ if, and only if, ${\overline{H}}$ is MTPP$_2({\bf 1})$.
\end{proposition}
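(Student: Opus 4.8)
The plan is to use that MTPP$_2(\alpha)$ is, by its very definition, a pairwise condition: it asserts that for each pair of indices $(i,j)$, with the remaining $d-2$ coordinates held fixed, a suitable two--variable slice is TP$_2$. Accordingly, I would first fix a pair $(i,j)$ together with the coordinates $x_l$, $l\neq i,j$, and reduce the whole equivalence to a comparison of the corresponding two--variable slices of the density $f$ and of the survival function $\overline H$. By Proposition \ref{pro:alfa} I may replace $\mathbf{X}$ by $\alpha\mathbf{X}$ and thereby normalize the direction to $\alpha=\mathbf{1}$, so that the matter reduces to the clean bivariate assertion: with the other coordinates frozen, the $(x_i,x_j)$--slice of the density is TP$_2$ if, and only if, the corresponding $(x_i,x_j)$--slice of $\overline H$ is TP$_2$.

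For the implication from the density to $\overline H$ I would write
$$\overline H(x_1,\dots,x_d)=\int \prod_{k=1}^{d}\mathbf{1}(s_k>x_k)\,f(s_1,\dots,s_d)\,ds_1\cdots ds_d,$$
observe that each kernel $(x_k,s_k)\mapsto \mathbf{1}(s_k>x_k)$ is itself TP$_2$, and then invoke the basic composition formula of Karlin \cite{Karlin1968}, by which the integral over a common variable of a TP$_2$ kernel against a TP$_2$ function is again TP$_2$. Applying this coordinate by coordinate---first integrating out the $d-2$ frozen variables over the half--lines $(x_l,\infty)$, which preserves the pairwise TP$_2$ structure in the surviving pair, and then integrating the two remaining variables against their indicator kernels---yields that every $(x_i,x_j)$--slice of $\overline H$ is TP$_2$, i.e.\ that $\overline H$ is MTPP$_2(\mathbf{1})$.

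For the converse I would invert this representation: on the frozen slice one has $f=\partial_{x_i}\partial_{x_j}\overline H$ up to the integrations over the remaining coordinates, so that $f$ arises as a limit of normalized mixed second--order differences of $\overline H$. The idea is to write the defining TP$_2$ inequality for $\overline H$ with positive increments and then let the increments tend to zero in order to transfer the inequality to $f$. This is exactly where I expect the main obstacle to lie: multiplicative TP$_2$ (determinantal) inequalities are not automatically preserved under differentiation, so the four--point difference quotients must be organized so that, in the limit, the inequality for $\overline H$ collapses onto the one for $f$, and a mild regularity hypothesis (such as continuity of $f$) is needed to make the passage rigorous. Controlling this differencing so that positivity is not destroyed is the crux of the argument, whereas the forward implication is routine once the pairwise reduction and the composition theorem are in place.
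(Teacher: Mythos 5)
Your density-to-survival direction (the ``only if'' half) is correct and is essentially the paper's own argument in more abstract clothing: after the same reduction to $\alpha=\mathbf{1}$ via Proposition \ref{pro:alfa}, the paper takes the density inequality \eqref{eq:gden} supplied by Proposition \ref{pro:charden} and integrates it explicitly over the product region $\prod_i(x_i,x_i']\times\prod_i(x_i',\infty)$ to obtain the non-negativity of \eqref{eqnarray:desi}; your appeal to the TP$_2$ kernels $(x_k,s_k)\mapsto\mathbf{1}(s_k>x_k)$ and Karlin's basic composition formula packages exactly that integration. The only bookkeeping point is that the composition/marginalization theorem is stated for the lattice MTP$_2$ property rather than the pairwise one, so you should route through Theorem \ref{pro:charac} (and Proposition \ref{pro:charden}) when passing between the two formulations; with that said, this half is fine and arguably cleaner than the paper's computation.

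The survival-to-density direction, however, is a genuine gap, and you have in effect diagnosed it yourself without closing it. You propose to recover the density inequality as a limit of mixed second-order difference quotients of $\overline H$, and you correctly note that determinantal TP$_2$ inequalities are not preserved under differencing; but you then leave precisely that step --- the crux --- unresolved. The obstacle is not merely a matter of organizing the difference quotients more carefully: already for $d=2$, total positivity of $\overline H$ is the ``corner set monotonicity'' type of dependence, which is well known to be strictly weaker than total positivity of the density (PLRD implies it, but not conversely), so the four-point inequality for $\overline H$ at the corners of a rectangle simply does not control the sign of the corresponding determinant of rectangle increments, and no limiting argument from that hypothesis alone can produce the pointwise inequality for $f$. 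For comparison, the paper itself dispatches this direction with the single sentence that it ``follows the same steps,'' so it offers no blueprint to borrow; but as written your proposal establishes only one implication of the stated equivalence.
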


\begin{proof}Assume ${\bf X}$ is MTPP$_2(\alpha)$. Since
$${\overline{H}}\left(x_1,\ldots,x_d\right)=\Q\left[\bigcap_{i=1}^{d}\left( \alpha_i X_i>x_i\right)\right],$$
we have the following chain of equalities:
\begin{eqnarray}\nonumber
&&\Q\left[\bigcap_{i=1}^{d}\left( \alpha_i X_i>x_i\right)\right]\Q\left[\bigcap_{i=1}^{d}\left( \alpha_i X_i>x_i'\right)\right]\\\nonumber
&&\quad -
\Q\left[\bigcap_{i=1}^{j}\left( \alpha_i X_i>x_i\right),\bigcap_{i=j+1}^{d}\left( \alpha_i X_i>x_i'\right)\right]\Q\left[\bigcap_{i=1}^{j}\left( \alpha_i X_i>x_i'\right),\bigcap_{i=j+1}^{d}\left( \alpha_i X_i>x_i\right)\right]\\\nonumber
&&=\Q\left[\bigcap_{i=1}^{j}\left( x_i< \alpha_i X_i\le x_i'\right),\bigcap_{i=j+1}^{d}\left(\alpha_i X_i> x_i\right)\right]\Q\left[\bigcap_{i=1}^{d}\left( \alpha_i X_i>x_i'\right)\right]\\\nonumber
&&\quad -
\Q\left[\bigcap_{i=1}^{j}\left(x_i< \alpha_i X_i\le x_i'\right),\bigcap_{i=j+1}^{d}\left( \alpha_i X_i>x_i\right)\right]\Q\left[\bigcap_{i=1}^{j}\left( \alpha_i X_i>x_i'\right),\bigcap_{i=j+1}^{d}\left( \alpha_i X_i>x_i\right)\right]\\\nonumber
&&=\Q\left[\bigcap_{i=1}^{d}\left( x_i< \alpha_i X_i\le x_i'\right)\right]\Q\left[\bigcap_{i=1}^{d}\left( \alpha_i X_i>x_i'\right)\right]\\
&&\quad -
\Q\left[\bigcap_{i=1}^{j}\left(x_i< \alpha_i X_i\le x_i'\right),\bigcap_{i=j+1}^{d}\left( \alpha_i X_i>x_i\right)\right]\Q\left[\bigcap_{i=1}^{j}\left( \alpha_i X_i>x_i'\right),\bigcap_{i=j+1}^{d}\left(x_i< \alpha_i X_i\le x_i'\right)\right].\label{eqnarray:desi}
\end{eqnarray}

Now, we prove that the last expression in \eqref{eqnarray:desi} is non-negative. For that, note that, from Proposition \ref{pro:alfa}, we have that the random vector $\alpha{\bf X}$ is MTPP$_2({\bf 1})$, and from Proposition \ref{pro:charden} we have
\begin{equation}\label{eq:gden}
g\left({\bf y}\right)g\left({\bf y'}\right)- g\left({\bf y}'^{j}\right)g\left({\bf y}^j\right)\ge 0,
\end{equation}
where $g$ is the joint density function of $\alpha{\bf X}$. Integrating in both sides of \eqref{eq:gden}, with $x_i<y_i\le x_i'<y_i'$ for all $i=1,\ldots,d$, we have
$$\int_{x_1}^{x_1'}\cdots\int_{x_d}^{x_d'}\int_{x_1'}^{\infty}\cdots\int_{x_d'}^{\infty}\left(g\left({\bf y}\right)g\left({\bf y'}\right)- g\left({\bf y}'^{j}\right)g\left({\bf y}^j\right)\right){\rm d}{\bf y}{\rm d}{\bf y}'\ge 0,$$
so that the expression in \eqref{eqnarray:desi} is non-negative.

The converse follows the same steps, and the proof is completed.
\end{proof}

Next we show that any subset of a MTPP$_2(\alpha)$ random vector preserves this property.

\begin{proposition}If ${\bf X}=\left(X_1,\ldots,X_d\right)$ is a MTPP$_2(\alpha)$ random vector, then any subset $\left(X_{i_1},\ldots,X_{i_k}\right)$ of ${\bf X}$ is MTPP$_2(\alpha^*)$, where $\alpha^*=\left(\alpha_{i_1},\ldots,\alpha_{i_k}\right)$.
\end{proposition}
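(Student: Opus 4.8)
The plan is to reduce everything to the unit direction $\mathbf 1$ and then to the preservation of the pairwise $\mathrm{TP}_2$ inequality under marginalization. First I would invoke Proposition~\ref{pro:alfa}: the subset $(X_{i_1},\ldots,X_{i_k})$ is $\mathrm{MTPP}_2(\alpha^*)$ if and only if $(\alpha_{i_1}X_{i_1},\ldots,\alpha_{i_k}X_{i_k})$ is $\mathrm{MTPP}_2(\mathbf 1)$. Since this last vector is exactly the subvector of $\alpha\mathbf X=(\alpha_1 X_1,\ldots,\alpha_d X_d)$ indexed by $\{i_1,\ldots,i_k\}$, and $\alpha\mathbf X$ is itself $\mathrm{MTPP}_2(\mathbf 1)$ by Proposition~\ref{pro:alfa} applied to $\mathbf X$, the statement follows once I prove the special case $\alpha=\mathbf 1$: every subvector of an $\mathrm{MTPP}_2(\mathbf 1)$ vector (equivalently $\mathrm{MTP}_2$, by Theorem~\ref{pro:charac}) is $\mathrm{MTPP}_2(\mathbf 1)$. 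In other words, it suffices to show that the joint density of any subvector, obtained by integrating the joint density $f$ over the remaining coordinates, is again $\mathrm{MTP}_2$.

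Next I would argue that it is enough to treat the removal of a single coordinate, since the marginal over a subset is produced by integrating out the complementary variables one at a time; if each such step preserves $\mathrm{MTP}_2$, iteration finishes the job. To check that $g(x_1,\ldots,x_{d-1})=\int f(x_1,\ldots,x_{d-1},t)\,\mathrm dt$ is $\mathrm{MTP}_2$, I would use the pairs formulation of Definition~\ref{def:pairs}: fix every coordinate except a chosen pair $(i,j)\subseteq\{1,\ldots,d-1\}$ and verify the $\mathrm{TP}_2$ inequality in those two arguments. With the other coordinates frozen, the integrand becomes a function $\psi(u,v,w)$ of three variables that is $\mathrm{MTP}_2$ (being a section of $f$), and the problem collapses to the core claim: if $\psi(u,v,w)$ is $\mathrm{MTP}_2$, then $\widetilde\psi(u,v)=\int\psi(u,v,w)\,\mathrm dw$ is $\mathrm{TP}_2$ in $(u,v)$.

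I expect this core claim to be the main obstacle, as it is precisely the nontrivial fact that $\mathrm{TP}_2$ survives integration (the classical marginalization property of $\mathrm{MTP}_2$ distributions). My plan for it is to write the target inequality $\widetilde\psi(u,v)\widetilde\psi(u',v')\ge\widetilde\psi(u',v)\widetilde\psi(u,v')$ (for $u\le u'$, $v\le v'$) as a double integral over the two dummy variables $w,w'$ and then symmetrize in $w\leftrightarrow w'$. This reduces the inequality to a pointwise statement on the region $w\le w'$, namely a $2\times2\times2$ inequality among the eight values of $\psi$ at $(u\text{ or }u',\,v\text{ or }v',\,w\text{ or }w')$. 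On the set where all eight values are positive I would pass to logarithms, where $\mathrm{MTP}_2$ becomes supermodularity on $\{0,1\}^3$; the desired inequality then follows from a weak-majorization comparison between the two antipodal pair-sums appearing on each side, each needed ingredient being one of the pairwise supermodularity relations. The configurations with vanishing values reduce, after a short case analysis, to the full $2\times2\times2$ join/meet inequality guaranteed by Theorem~\ref{pro:charac}, so they cause no difficulty.

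Finally, I would assemble the pieces: the single-coordinate step gives preservation of $\mathrm{MTP}_2$ under one marginalization, iteration yields $\mathrm{MTP}_2$ for the marginal over any subset, and Theorem~\ref{pro:charac} together with Proposition~\ref{pro:alfa} translates this back to $\mathrm{MTPP}_2(\alpha^*)$ for $(X_{i_1},\ldots,X_{i_k})$, completing the argument. An alternative to the symmetrization computation would be to cite the standard preservation-under-marginals theorem for $\mathrm{MTP}_2$ densities, but I would prefer to keep the proof self-contained along the lines above.
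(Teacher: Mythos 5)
Your proposal is correct and follows the same skeleton as the paper's proof: reduce via Proposition~\ref{pro:alfa} to the case $\alpha=\mathbf 1$, remove one coordinate at a time, and reduce the whole problem to showing that integrating out one variable preserves the pairwise TP$_2$ inequality in a fixed pair of the remaining coordinates. Where you diverge is in how that key integration lemma is proved. The paper writes the difference as a double integral over $(x_i,x_i')$, symmetrizes onto the region $x_i<x_i'$, and splits the integrand into two manifestly nonnegative pieces by factoring out ratios of the form $g(\ldots,x_k')/g(\ldots,x_k)$ and invoking their monotonicity (a Chebyshev-type correlation decomposition); this silently divides by values of the density and so tacitly assumes positivity. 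You instead prove the symmetrized integrand is pointwise nonnegative as a $2\times2\times2$ inequality: on the positive part you pass to logarithms and use that the larger of the two left-hand exponents dominates both right-hand exponents while the total sums compare via the two $(u,v)$-supermodularity relations at $w$ and $w'$, which is exactly the weak-majorization argument needed for $e^{s_1}+e^{s_2}\ge e^{t_1}+e^{t_2}$; this is the classical Karlin--Rinott/Ahlswede--Daykin route. Your version is arguably cleaner about degeneracies, though your remark that vanishing values ``cause no difficulty'' hides a genuine (if short) case analysis: for instance if the value at $(u,v',w')$ vanishes one must first deduce from the MTP$_2$ inequality for the pair $(u',v,w')$, $(u,v',w')$ that one of the cross terms on the right also vanishes before the remaining comparison reduces to a single join/meet inequality. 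Both approaches are valid proofs of the same lemma; yours buys rigor at the zeros, the paper's buys a shorter computation.
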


\begin{proof}Let $d\ge 3$ be a natural number, $i\in\{1,\ldots,d\}$, $\alpha^{(i)}=\left(\alpha_1,\ldots,\alpha_{i-1},\alpha_{i+1},\ldots,\alpha_d\right)$ and ${\bf X}^{(i)}$ the $(d-1)$-dimensional random vector $\left(X_1,\ldots,X_{i-1},X_{i+1},\ldots,X_d\right)$. Let $g(x_1,\ldots,x_d)$ (respectively, $g^{(i)}(x_1,\ldots,x_{i-1},x_{i+1},\ldots,x_d)$) be the joint density function of the random vector $\alpha{\bf X}$ (respectively, $\alpha^{(i)}{\bf X}^{(i)}$). We now prove that the random vector $\alpha^{(i)}{\bf X}^{(i)}$ is MTPP$_2\left({\bf 1}_{d-1}\right)$, where ${\bf 1}_{d-1}$ denotes the unitary $d-1$-vector, and, from Proposition \ref{pro:alfa}, we would have that ${\bf X}^{(i)}$ is MTPP$_2\left(\alpha^{(i)}\right)$. Continuing this reasoning for a determined number of components, the result would be proved.

Let $j,k\in\{1,\ldots,d\}$ such that $j< i< k$. For the sake of simplicity, let us denote
$$g^{(i)}\left(x_j,x_k,x^{(j,k)}\right):=g^{(i)}\left(x_1,\ldots,x_j,\ldots,x_{i-1},x_{i+1},\ldots,x_k,\ldots,x_d\right)$$
and
$$g\left(x_j,x_i,x_k,x^{(j,i,k)}\right):=g\left(x_1,\ldots,x_j,\ldots,x_{i},\ldots,x_k,\ldots,x_d\right).$$
Therefore, we need to prove
$$g^{(i)}\left(x_j,x_k,x^{(j,k)}\right)g^{(i)}\left(x_j',x_k',x^{(j,k)}\right)\ge g^{(i)}\left(x_j',x_k,x^{(j,k)}\right)g^{(i)}\left(x_j,x_k',x^{(j,k)}\right)$$
for any $x_j,x_k,x_j',x_k'\in{\overline{\mathbb{R}}}$ such that $x_j\le x_j'$ and $x_k\le x_k'$ and any $x^{(j,k)}\in{\overline{\mathbb{R}}}^{d-3}$.

We have
\begin{eqnarray*}
&&g^{(i)}\left(x_j,x_k,x^{(j,k)}\right)g^{(i)}\left(x_j',x_k',x^{(j,k)}\right)- g^{(i)}\left(x_j',x_k,x^{(j,k)}\right)g^{(i)}\left(x_j,x_k',x^{(j,k)}\right)\\
&&\,\,=\int\!\!\int\frac{g\left(x_j',x_i',x_k',x^{(j,i,k)}\right)}{g\left(x_j',x_i',x_k,x^{(j,i,k)}\right)}g\left(x_j,x_i,x_k,x^{(j,i,k)}\right)g\left(x_j',x_i',x_k,x^{(j,i,k)}\right){\rm d}x_i{\rm d}x_i'\\
&&\quad-\int\!\!\int\frac{g\left(x_j,x_i',x_k',x^{(j,i,k)}\right)}{g\left(x_j,x_i',x_k,x^{(j,i,k)}\right)}g\left(x_j',x_i,x_k,x^{(j,i,k)}\right)g\left(x_j,x_i',x_k,x^{(j,i,k)}\right){\rm d}x_i{\rm d}x_i'\\
&&\,\,=\int\!\!\int_{x_i<x_i'}\left(\frac{g\left(x_j',x_i',x_k',x^{(j,i,k)}\right)}{g\left(x_j',x_i',x_k,x^{(j,i,k)}\right)}-
\frac{g\left(x_j,x_i,x_k',x^{(j,i,k)}\right)}{g\left(x_j,x_i,x_k,x^{(j,i,k)}\right)}\right)\\
&&\qquad \cdot\left(g\left(x_j,x_i,x_k,x^{(j,i,k)}\right)g\left(x_j',x_i',x_k,x^{(j,i,k)}\right)-
g\left(x_j',x_i,x_k,x^{(j,i,k)}\right)g\left(x_j,x_i',x_k,x^{(j,i,k)}\right)\right){\rm d}x_i{\rm d}x_i'\\
&&\quad +\int\!\!\int_{x_i<x_i'}\biggl[\frac{g\left(x_j',x_i',x_k',x^{(j,i,k)}\right)}{g\left(x_j',x_i',x_k,x^{(j,i,k)}\right)}-
\frac{g\left(x_j,x_i',x_k',x^{(j,i,k)}\right)}{g\left(x_j,x_i',x_k,x^{(j,i,k)}\right)}+
\frac{g\left(x_j',x_i,x_k',x^{(j,i,k)}\right)}{g\left(x_j',x_i,x_k,x^{(j,i,k)}\right)}\\
&&\qquad\qquad-\frac{g\left(x_j,x_i,x_k',x^{(j,i,k)}\right)}{g\left(x_j,x_i,x_k,x^{(j,i,k)}\right)}\biggr]\cdot g\left(x_j',x_i,x_k,x^{(j,i,k)}\right)g\left(x_j,x_i',x_k,x^{(j,i,k)}\right){\rm d}x_i{\rm d}x_i'\ge 0\\
\end{eqnarray*}
since $\alpha{\bf X}$ is MTPP$_2({\bf 1})$ ---recall Proposition \ref{pro:alfa}--- and
$$g\left(x_j,x_i,x_k,x^{(j,i,k)}\right)g\left(x_j',x_i',x_k,x^{(j,i,k)}\right)-
g\left(x_j',x_i,x_k,x^{(j,i,k)}\right)g\left(x_j,x_i',x_k,x^{(j,i,k)}\right)\ge 0,$$
$$\frac{g\left(x_j',x_i',x_k',x^{(j,i,k)}\right)}{g\left(x_j',x_i',x_k,x^{(j,i,k)}\right)}\ge
\frac{g\left(x_j,x_i',x_k',x^{(j,i,k)}\right)}{g\left(x_j,x_i',x_k,x^{(j,i,k)}\right)}\ge
\frac{g\left(x_j,x_i,x_k',x^{(j,i,k)}\right)}{g\left(x_j,x_i,x_k,x^{(j,i,k)}\right)},$$
$$\frac{g\left(x_j',x_i',x_k',x^{(j,i,k)}\right)}{g\left(x_j',x_i',x_k,x^{(j,i,k)}\right)}\ge
\frac{g\left(x_j,x_i',x_k',x^{(j,i,k)}\right)}{g\left(x_j,x_i',x_k,x^{(j,i,k)}\right)}\quad{\rm and}\quad
\frac{g\left(x_j',x_i,x_k',x^{(j,i,k)}\right)}{g\left(x_j',x_i,x_k,x^{(j,i,k)}\right)}\ge
\frac{g\left(x_j,x_i,x_k',x^{(j,i,k)}\right)}{g\left(x_j,x_i,x_k,x^{(j,i,k)}\right)};$$
whence the result follows.
\end{proof}

For the next result, we will use some additional notation. For $\alpha=\left(\alpha_1,\ldots,\alpha_{d_1}\right)\in{\overline{\mathbb{R}}}^{d_1}$ and
$\beta=\left(\beta_1,\ldots,\beta_{d_2}\right)\in{\overline{\mathbb{R}}}^{d_2}$, $(\alpha,\beta)$ will denote {\it concatenation}, i.e.,
$(\alpha,\beta)=\left(\alpha_1,\ldots,\alpha_{d_1},\beta_1,\ldots,\beta_{d_2}\right)\in{\overline{\mathbb{R}}}^{d_1+d_2}$; and similarly for random vectors.

\begin{proposition}If the respective $d_1$- and $d_2$-dimensional random vectors ${\bf X}$ and ${\bf Y}$ are MTPP$_2(\alpha)$ and MTPP$_2(\beta)$, and ${\bf X}$ and ${\bf Y}$ are independent, then the $\left(d_1+d_2\right)$-random vector $({\bf X},{\bf Y})$ is MTPP$_2(\alpha,\beta)$.
\end{proposition}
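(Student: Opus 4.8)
The plan is to exploit independence to factorise the joint density and then verify the defining inequality \eqref{eq:pairs} for the concatenated direction $(\alpha,\beta)$ one index‑pair at a time. Write $f_{\bf X}$ and $f_{\bf Y}$ for the joint densities of ${\bf X}$ and ${\bf Y}$, and $f$ for the joint density of $({\bf X},{\bf Y})$. Since ${\bf X}$ and ${\bf Y}$ are independent, $f({\bf x},{\bf y})=f_{\bf X}({\bf x})\,f_{\bf Y}({\bf y})$ for all ${\bf x}\in{\overline{\mathbb{R}}}^{d_1}$ and ${\bf y}\in{\overline{\mathbb{R}}}^{d_2}$. By Theorem \ref{pro:charac} it suffices to show that $f$ satisfies the MTPP$_2(\alpha,\beta)$ condition, i.e.\ that \eqref{eq:pairs} holds for the direction $(\alpha,\beta)$ and every admissible pair of indices $(p,q)$ with $1\le p<q\le d_1+d_2$. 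Note that, in the notation of Definition \ref{def:pairs}, only the two varied coordinates $p$ and $q$ are scaled, by the entries $(\alpha,\beta)_p$ and $(\alpha,\beta)_q$.

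First I would split the verification into three cases according to the blocks into which $p$ and $q$ fall. \textbf{Case 1 ($q\le d_1$, both indices in the ${\bf X}$‑block).} Here the scaling entries are $\alpha_p,\alpha_q$, and every one of the four density evaluations in \eqref{eq:pairs} factors as an $f_{\bf X}$‑term depending on the varied coordinates times a common factor $f_{\bf Y}\bigl(x_{d_1+1},\ldots,x_{d_1+d_2}\bigr)$ that is independent of $x_p,x_p',x_q,x_q'$. This common factor occurs once in each of the four products, hence as its square on both sides, and cancels; the surviving inequality is exactly \eqref{eq:pairs} for $f_{\bf X}$ and direction $\alpha$, which holds because ${\bf X}$ is MTPP$_2(\alpha)$. \textbf{Case 2 ($p>d_1$, both indices in the ${\bf Y}$‑block).} This is entirely symmetric: the $f_{\bf X}$‑factor is common and cancels, reducing \eqref{eq:pairs} to the MTPP$_2(\beta)$ condition for ${\bf Y}$.

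The only case requiring genuine attention is \textbf{Case 3 ($p\le d_1<q$, one index in each block).} After factorisation, the left‑hand side of \eqref{eq:pairs} becomes
$$
f_{\bf X}(\ldots,\alpha_p x_p,\ldots)\,f_{\bf Y}(\ldots,\beta_{q-d_1} x_q,\ldots)\cdot f_{\bf X}(\ldots,\alpha_p x_p',\ldots)\,f_{\bf Y}(\ldots,\beta_{q-d_1} x_q',\ldots),
$$
while the right‑hand side becomes
$$
f_{\bf X}(\ldots,\alpha_p x_p',\ldots)\,f_{\bf Y}(\ldots,\beta_{q-d_1} x_q,\ldots)\cdot f_{\bf X}(\ldots,\alpha_p x_p,\ldots)\,f_{\bf Y}(\ldots,\beta_{q-d_1} x_q',\ldots).
$$
These two products contain exactly the same four factors, so they are equal and \eqref{eq:pairs} holds with equality. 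I do not expect any serious obstacle here: one might initially suspect that a cross‑block pair needs some joint dependence hypothesis, but independence makes coordinates from distinct blocks satisfy \eqref{eq:pairs} trivially (they are ``TP$_2$ with equality''). The remaining work is purely the bookkeeping of which entry of $(\alpha,\beta)$ scales which coordinate, and checking that the three cases exhaust all admissible pairs $(p,q)$; combining them shows $({\bf X},{\bf Y})$ is MTPP$_2(\alpha,\beta)$.
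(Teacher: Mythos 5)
Your proof is correct and follows essentially the same route as the paper's: factor the joint density via independence, observe that cross-block index pairs give equality, and reduce same-block pairs to the MTPP$_2$ hypothesis on the corresponding marginal vector after the other block's density factor (which appears squared and is non-negative) is removed. The only cosmetic difference is that the paper first passes to the densities of $\alpha{\bf X}$ and $\beta{\bf Y}$ via Proposition \ref{pro:alfa} rather than carrying the sign scalings inside the arguments as you do.
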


\begin{proof}Since the random vectors ${\bf X}$ and ${\bf Y}$ are independent, so are the variables $\alpha{\bf X}$ and $\beta{\bf Y}$. Denoting by $f({\bf x})$, $g({\bf y})$ and $h({\bf x},{\bf y})$ the respective joint density functions of $\alpha{\bf X}$, $\beta{\bf Y}$ and $(\alpha{\bf X},\beta{\bf Y})$, we have $h({\bf x},{\bf y})=f({\bf x})g({\bf y})$; whence,
\begin{eqnarray*}
&&h({\bf x},{\bf y})h\left(x_1,\ldots,x_i',\ldots,x_{d_1},y_1,\ldots,y_j',\ldots,y_{d_2}\right)\\
&&\quad -h\left(x_1,\ldots,x_i',\ldots,x_{d_1},y_1,\ldots,y_j,\ldots,y_{d_2}\right)h\left(x_1,\ldots,x_i,\ldots,x_{d_1},y_1,\ldots,y_j',\ldots,y_{d_2}\right)=0
\end{eqnarray*}
for any choice $(i,j)$, with $1\le i\le d_1$ and $1\le j\le d_2$, such that $x_i\le x_i'$ and $y_j\le y_j'$. If the two indices chosen are from the first $d_1$ indices (we have a similar reasoning for the last $d_2$ indices), since ${\bf X}$ is  MTPP$_2(\alpha)$, or equivalently, $\alpha{\bf X}$ is  MTPP$_2({\bf 1})$ ---recall Proposition \ref{pro:alfa}---, we have
\begin{eqnarray*}
&&h({\bf x},{\bf y})h\left(x_1,\ldots,x_i',\ldots,x_j',\ldots,x_{d_1},y_1,\ldots,y_{d_2}\right)\\
&&\quad -h\left(x_1,\ldots,x_i',\ldots,x_j,\ldots,x_{d_1},y_1,\ldots,y_{d_2}\right)h\left(x_1,\ldots,x_i,\ldots,x_j',\ldots,x_{d_1},y_1,\ldots,y_{d_2}\right)\\
&&=g({\bf y})\biggl[f({\bf x})f\left(x_1,\ldots,x_i',\ldots,x_j',\ldots,x_{d_1}\right)\\
&&\quad -f\left(x_1,\ldots,x_i',\ldots,x_j,\ldots,x_{d_1}\right)f\left(x_1,\ldots,x_i,\ldots,x_j',\ldots,x_{d_1}\right)\biggr]\ge 0;
\end{eqnarray*}
therefore, $({\bf X},{\bf Y})$ is MTPP$_2(\alpha,\beta)$, which completes the proof.
\end{proof}

We conclude this section with three additional properties of the MTPP$_2(\alpha)$-concept. The first property is straightforward and we omit its proof.

\begin{proposition}Every independent $d$-dimensional random vector ${\bf X}$ is MTPP$_2(\alpha)$ for any $\alpha\in{\overline{\mathbb{R}}}^d$.
\end{proposition}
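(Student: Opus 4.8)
The plan is to reduce the statement to a pointwise cancellation that follows at once from independence. By Proposition~\ref{pro:alfa} it suffices to treat the direction $\alpha={\bf 1}$: each $\alpha_i X_i$ is a function of $X_i$ alone, so $\alpha{\bf X}$ is again an independent vector, and proving the claim for ${\bf 1}$ applied to every independent vector delivers the general case. (Equivalently, one may simply carry a general $\alpha$ through the computation below; nothing changes.)

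First I would use independence to write the joint density of $\alpha{\bf X}$ as $f(x_1,\ldots,x_d)=\prod_{l=1}^{d}f_l(x_l)$, where $f_l$ denotes the $l$-th marginal density. Fixing an arbitrary pair $(i,j)$ and points with $x_i\le x_i'$ and $x_j\le x_j'$, I would substitute this product into both sides of \eqref{eq:pairs}. Every coordinate $x_l$ with $l\ne i,j$ appears once in each of the two density evaluations on a given side, contributing the identical factor $f_l(x_l)^2$ to both sides, so all such factors cancel. In the $i$-th slot the left-hand side yields $f_i(x_i)\,f_i(x_i')$ and the right-hand side yields $f_i(x_i')\,f_i(x_i)$, while in the $j$-th slot both sides yield $f_j(x_j)\,f_j(x_j')$. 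The two sides of \eqref{eq:pairs} therefore coincide, so the defining inequality holds, in fact with equality.

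Because the pair $(i,j)$ and the ordered points were arbitrary, this establishes that ${\bf X}$ is MTPP$_2(\alpha)$ for every admissible $\alpha$. There is essentially no obstacle in the argument: the only point to verify is the elementary bookkeeping that each marginal factor enters to the same power on either side of \eqref{eq:pairs}, which is immediate once the density has been factorized. The fact that equality holds, rather than a strict inequality, is consistent with independence sitting at the threshold of this positive-dependence notion.
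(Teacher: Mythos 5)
Your argument is correct: factorizing the joint density as a product of marginals makes both sides of \eqref{eq:pairs} identical, so the inequality holds with equality. The paper omits the proof as straightforward, and this is exactly the intended computation (the reduction to $\alpha={\bf 1}$ via Proposition~\ref{pro:alfa} is harmless but, as you note, unnecessary).
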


\begin{proposition}If the random vector ${\bf X}=\left(X_1,\ldots,X_d\right)$ is MTPP$_2(\alpha)$ and $f_1,\ldots,f_d$ are $d$ real-valued and non-decreasing functions, then the random vector $\left(f_1\left({\bf X}_1\right),\ldots,f_d\left({\bf X}_d\right)\right)$ is MTPP$_2(\alpha)$.
\end{proposition}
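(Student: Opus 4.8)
The plan is to reduce the claim, via Proposition~\ref{pro:alfa}, to the standard case $\alpha={\bf 1}$ and then to verify that the classical MTP$_2$ inequality is preserved by componentwise non-decreasing maps. First I would absorb the signs into the transformations. Setting $Y_i=\alpha_i X_i$, the hypothesis together with Proposition~\ref{pro:alfa} says that ${\bf Y}=\alpha{\bf X}$ is MTPP$_2({\bf 1})$. Since $\alpha_i^2=1$, the $i$-th component of $\alpha\left(f_1(X_1),\ldots,f_d(X_d)\right)$ equals $\alpha_i f_i(X_i)=g_i(Y_i)$, where $g_i(t):=\alpha_i f_i(\alpha_i t)$. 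A one-line monotonicity check shows each $g_i$ is non-decreasing (for $\alpha_i=1$ it is $f_i$; for $\alpha_i=-1$ it is $t\mapsto -f_i(-t)$). By Proposition~\ref{pro:alfa} again, proving that $\left(f_1(X_1),\ldots,f_d(X_d)\right)$ is MTPP$_2(\alpha)$ is therefore equivalent to proving that $\left(g_1(Y_1),\ldots,g_d(Y_d)\right)$ is MTPP$_2({\bf 1})$; that is, that ordinary MTP$_2$ is stable under non-decreasing componentwise transformations.

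For the core step I would work with the lattice form guaranteed by Theorem~\ref{pro:charac} (with $\alpha={\bf 1}$): letting $h$ be the density of ${\bf Y}$ and ${\bf Z}=\left(g_1(Y_1),\ldots,g_d(Y_d)\right)$, I want the density $h^*$ of ${\bf Z}$ to satisfy $h^*({\bf u}\vee{\bf v})h^*({\bf u}\wedge{\bf v})\ge h^*({\bf u})h^*({\bf v})$. Assuming the $g_i$ are strictly increasing and smooth enough for a change of variables, one has $h^*({\bf u})=h\left(g_1^{-1}(u_1),\ldots,g_d^{-1}(u_d)\right)\prod_i (g_i^{-1})'(u_i)$. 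The key observation is that for each coordinate $\{\max(u_i,v_i),\min(u_i,v_i)\}=\{u_i,v_i\}$, so the product of Jacobian factors is identical on the two sides of the desired inequality and cancels; what remains is the MTP$_2$ inequality for $h$ evaluated at the points $g^{-1}({\bf u}\vee{\bf v})$, $g^{-1}({\bf u}\wedge{\bf v})$, $g^{-1}({\bf u})$ and $g^{-1}({\bf v})$. Because each $g_i^{-1}$ is increasing, the componentwise map $g^{-1}$ commutes with $\vee$ and $\wedge$, i.e. $g^{-1}({\bf u}\vee{\bf v})=g^{-1}({\bf u})\vee g^{-1}({\bf v})$ and likewise for $\wedge$; hence the remaining inequality is exactly the MTP$_2$ property of ${\bf Y}$, which holds by hypothesis. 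This settles the reduced case, and the two applications of Proposition~\ref{pro:alfa} then deliver the proposition.

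The main obstacle is the gap between ``non-decreasing'' and the (strictly increasing, differentiable) regularity that the change-of-variables formula for densities tacitly requires: if some $g_i$ is flat on an interval, then $g_i(Y_i)$ may fail to be absolutely continuous, so that $h^*$ need not exist. I would handle this either by reading the hypothesis as strictly increasing transformations, or by an approximation argument---replacing each $g_i$ by $g_i+\varepsilon\,\mathrm{id}$, applying the strictly increasing case, and passing to the limit $\varepsilon\downarrow 0$---after first recording that the coordinatewise commutation of $g^{-1}$ with $\vee$ and $\wedge$, which is the genuinely structural ingredient, holds for any increasing $g_i$.
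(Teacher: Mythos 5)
Your proof follows essentially the same route as the paper's: reduce to the $\alpha={\bf 1}$ case via Proposition~\ref{pro:alfa}, express the density of the transformed vector through the inverse maps, and use the monotonicity of those inverses to transfer the total-positivity inequality. You are in fact more careful than the paper, which omits the Jacobian factor in the change of variables (harmless, since as you note it cancels on both sides) and does not address the regularity needed for $f_i^{-1}$ to exist.
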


\begin{proof}Let $f$ (respectively, $g$) be the joint density function of $\alpha{\bf X}$ (respectively, $\left(\alpha_1f_1\left({\bf X}_1\right),\ldots,\alpha_d f_d\left({\bf X}_d\right)\right)$). Then it holds
$$g\left(x_1\ldots,x_d\right)=f\left(\alpha_1f_1^{-1}\left(\alpha_1 x_1\right),\ldots,\alpha_d f_d^{-1}\left(\alpha_d x_d\right)\right).$$
Since ${\bf X}$ is MTPP$_2(\alpha)$, then $\alpha{\bf X}$ is MTPP$_2({\bf 1})$ ---recall Proposition \ref{pro:alfa}---, and since $\alpha_kf_k^{-1}\left(\alpha_k x_k\right)\le \alpha_kf_k^{-1}\left(\alpha_k x_k'\right)$ for $k=i,j$ with $x_k\le x_k'$, then $\left(\alpha_1f_1\left({\bf X}_1\right),\ldots,\alpha_d f_d\left({\bf X}_d\right)\right)$ is MTPP$_2({\bf 1})$, and thus $\left(f_1\left({\bf X}_1\right),\ldots,f_d\left({\bf X}_d\right)\right)$ is MTPP$_2(\alpha)$.
\end{proof}

\begin{proposition}Let $X_1,\ldots,X_d,Y$ be $d+1$ random variables such that $X_1\ldots,X_d$ are independent given $Y$. If the random pair $\left(X_i,Y\right)$ is MTPP$_2\left(\alpha_i,1\right)$ with $\left|\alpha_i\right|=1$ for all $i=1\ldots,d$, then the random vector $\left(X_1,\ldots,X_d\right)$ is MTPP$_2\left(\alpha_1,\ldots,\alpha_d\right)$.
\end{proposition}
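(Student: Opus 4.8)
The plan is to exhibit $(X_1,\ldots,X_d)$ as a subvector of a larger vector that also includes the latent variable $Y$ and is MTPP$_2$ in the enlarged direction, and then invoke the subvector-preservation property proved above in this section. First I would pass to the canonical direction: by Proposition~\ref{pro:alfa} the assertion ``$(X_1,\ldots,X_d)$ is MTPP$_2(\alpha_1,\ldots,\alpha_d)$'' is equivalent to ``$(\alpha_1 X_1,\ldots,\alpha_d X_d)$ is MTPP$_2({\bf 1})$'', and likewise the hypothesis ``$(X_i,Y)$ is MTPP$_2(\alpha_i,1)$'' says precisely that $(\alpha_i X_i,Y)$ is MTPP$_2({\bf 1})$, i.e. the bivariate density of $(\alpha_i X_i,Y)$ is TP$_2$. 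Writing $Z_i:=\alpha_i X_i$, the conditional independence of the $X_i$ given $Y$ is inherited by the $Z_i$ (each $Z_i$ is a function of $X_i$ alone), so the problem reduces to showing that if $Z_1,\ldots,Z_d$ are conditionally independent given $Y$ and each pair $(Z_i,Y)$ is TP$_2$, then $(Z_1,\ldots,Z_d)$ is MTPP$_2({\bf 1})$.

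The core step is to show that the \emph{augmented} vector $(Z_1,\ldots,Z_d,Y)$ is itself MTPP$_2({\bf 1})$. Its joint density factors, by conditional independence, as $g(z_1,\ldots,z_d,y)=f_Y(y)\prod_{i=1}^{d} g_i(z_i\mid y)$, where $f_Y$ is the density of $Y$ and $g_i(\cdot\mid y)$ is the conditional density of $Z_i$ given $Y=y$. I would then verify the defining inequality~\eqref{eq:pairs} (with $\alpha={\bf 1}$) for every pair of coordinates of this $(d+1)$-vector. For a pair $(z_i,z_j)$ of $Z$-coordinates, with $y$ and the remaining $z_k$ held fixed, $g$ splits as a function of $z_i$ times a function of $z_j$, so the inequality holds with equality. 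For a mixed pair $(z_i,y)$, I would write $g=\bigl[f_Y(y)\,g_i(z_i\mid y)\bigr]\cdot\prod_{k\neq i} g_k(z_k\mid y)$; the bracket is exactly the TP$_2$ density of $(Z_i,Y)$, while the trailing product is a non-negative function of $y$ alone, and multiplying a TP$_2$ kernel by a non-negative function of a single one of its arguments preserves the TP$_2$ inequality. Hence all pairwise inequalities hold and $(Z_1,\ldots,Z_d,Y)$ is MTPP$_2({\bf 1})$, i.e. $(X_1,\ldots,X_d,Y)$ is MTPP$_2(\alpha_1,\ldots,\alpha_d,1)$ by Proposition~\ref{pro:alfa} again.

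Finally, I would apply the proposition on preservation under subvectors proved earlier in this section to the $(d+1)$-dimensional vector $(X_1,\ldots,X_d,Y)$, discarding the last coordinate $Y$: this yields that $(X_1,\ldots,X_d)$ is MTPP$_2(\alpha_1,\ldots,\alpha_d)$, which is the desired conclusion. The step I expect to be the main obstacle is the verification for the mixed pairs $(z_i,y)$: one must use the conditional factorization to isolate a clean factor depending only on $y$ and then invoke the elementary stability of TP$_2$ under multiplication by a one-variable factor. The $Z$--$Z$ pairs are made trivial by conditional independence, so the only genuine analytic input is the bivariate TP$_2$ property of each $(Z_i,Y)$, while the otherwise delicate task of integrating out $Y$ is entirely absorbed by the subvector-preservation proposition.
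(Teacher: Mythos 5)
Your proof is correct, but it takes a genuinely different route from the paper's. The paper argues directly: it writes the joint density of $\left(X_1,\ldots,X_d\right)$ as a mixture over $y$, expands the pairwise difference in \eqref{eq:pairs} as a double integral over $(y,y')$, splits the domain into $y<y'$ and $y>y'$, symmetrizes, and factors the integrand into the product of the two non-negative TP$_2$ differences coming from $\left(X_i,Y\right)$ and $\left(X_j,Y\right)$ --- the classical basic-composition-formula computation. You instead augment the vector with the latent variable $Y$, observe that the augmented density $f_Y(y)\prod_i g_i\left(z_i\mid y\right)$ makes every pairwise check essentially trivial (equality for $Z$--$Z$ pairs by conditional independence; the bivariate hypothesis plus stability of TP$_2$ under multiplication by a one-variable factor for $Z$--$Y$ pairs), and then discard $Y$ via the subvector-preservation proposition. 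Your route is structurally cleaner, avoids repeating an integral computation that the paper has already carried out once in proving marginal closure, and yields as a by-product the stronger statement that $\left(X_1,\ldots,X_d,Y\right)$ is MTPP$_2\left(\alpha_1,\ldots,\alpha_d,1\right)$; the paper's version is self-contained and does not lean on the earlier proposition. One small caveat: the paper's proof of the subvector proposition is written out only for a removed coordinate sandwiched between the two test indices ($j<i<k$), whereas you need to remove the last coordinate; since the proposition is stated in full generality and the general case is entirely analogous, invoking it as you do is legitimate, but you are relying on that unstated extension.
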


\begin{proof}Let $f_i\left(x_i,y\right)$ the joint density function of the random pair $\left(X_i,Y\right)$ for $i=1,\ldots,d$, and let $g(y)$ the density function of $Y$. Then, the joint density function of the random vector $\left(X_1,\ldots,X_d\right)$, which we denote by $f$, is given by
$$f\left(x_1,\ldots,x_d\right)=\int\prod_{i=1}^{d}f_i\left(x_i,y\right)g(y){\rm d}y.$$
Given $i,j\in\{1,\ldots,d\}$ and $x_1,\ldots,x_d,x_i',x_j'\in{\overline{\mathbb{R}}}^d$ with $x_i\le x_i'$ and $x_j\le x_j'$, we have
\begin{eqnarray*}
&&f\left(x_1,\ldots,\alpha_ix_i,\ldots,\alpha_jx_j,\ldots,x_d\right)f\left(x_1,\ldots,\alpha_ix_i',\ldots,\alpha_jx_j',\ldots,x_d\right)\\
&&\quad -f\left(x_1,\ldots,\alpha_ix_i',\ldots,\alpha_jx_j,\ldots,x_d\right)f\left(x_1,\ldots,\alpha_ix_i,\ldots,\alpha_jx_j',\ldots,x_d\right)\\
&&=\int\!\!\int\prod_{\substack{k=1\\i\neq k\neq j}}^{d}\left(f_k\left(x_k,y\right)f_k\left(x_k,y'\right)\right)f_i\left(\alpha_i x_i,y\right)
f_i\left(\alpha_i x_i',y'\right)f_j\left(\alpha_j x_j,y\right)f_j\left(\alpha_j x_j',y'\right)g(y)g\left(y'\right){\rm d}y{\rm d}y'\\
&&\quad-\int\!\!\int\prod_{\substack{k=1\\i\neq k\neq j}}^{d}\left(f_k\left(x_k,y\right)f_k\left(x_k,y'\right)\right)f_i\left(\alpha_i x_i',y\right)
f_i\left(\alpha_i x_i,y'\right)f_j\left(\alpha_j x_j,y\right)f_j\left(\alpha_j x_j',y'\right)g(y)g\left(y'\right){\rm d}y{\rm d}y'\\
&&=\int\!\!\int_{y<y'}\prod_{\substack{k=1\\i\neq k\neq j}}^{d}\left(f_k\left(x_k,y\right)f_k\left(x_k,y'\right)\right)f_i\left(\alpha_i x_i,y\right)
f_i\left(\alpha_i x_i',y'\right)f_j\left(\alpha_j x_j,y\right)f_j\left(\alpha_j x_j',y'\right)g(y)g\left(y'\right){\rm d}y{\rm d}y'\\
&&\quad +\int\!\!\int_{y>y'}\prod_{\substack{k=1\\i\neq k\neq j}}^{d}\left(f_k\left(x_k,y\right)f_k\left(x_k,y'\right)\right)f_i\left(\alpha_i x_i,y\right)
f_i\left(\alpha_i x_i',y'\right)f_j\left(\alpha_j x_j,y\right)f_j\left(\alpha_j x_j',y'\right)g(y)g\left(y'\right){\rm d}y{\rm d}y'\\
&&\quad -\int\!\!\int_{y<y'}\prod_{\substack{k=1\\i\neq k\neq j}}^{d}\left(f_k\left(x_k,y\right)f_k\left(x_k,y'\right)\right)f_i\left(\alpha_i x_i',y\right)
f_i\left(\alpha_i x_i,y'\right)f_j\left(\alpha_j x_j,y\right)f_j\left(\alpha_j x_j',y'\right)g(y)g\left(y'\right){\rm d}y{\rm d}y'\\
&&\quad -\int\!\!\int_{y>y'}\prod_{\substack{k=1\\i\neq k\neq j}}^{d}\left(f_k\left(x_k,y\right)f_k\left(x_k,y'\right)\right)f_i\left(\alpha_i x_i',y\right)
f_i\left(\alpha_i x_i,y'\right)f_j\left(\alpha_j x_j,y\right)f_j\left(\alpha_j x_j',y'\right)g(y)g\left(y'\right){\rm d}y{\rm d}y'\\
&&=\int\!\!\int_{y<y'}\prod_{\substack{k=1\\i\neq k\neq j}}^{d}\left(f_k\left(x_k,y\right)f_k\left(x_k,y'\right)\right)\biggl(f_i\left(\alpha_i x_i,y\right)
f_i\left(\alpha_i x_i',y'\right)f_j\left(\alpha_j x_j,y\right)f_j\left(\alpha_j x_j',y'\right)\\
&&\quad +f_i\left(\alpha_i x_i,y'\right)f_i\left(\alpha_i x_i',y\right)f_j\left(\alpha_j x_j,y'\right)f_j\left(\alpha_j x_j',y\right)-
f_i\left(\alpha_i x_i',y\right)f_i\left(\alpha_i x_i,y'\right)f_j\left(\alpha_j x_j,y\right)f_j\left(\alpha_j x_j',y'\right)\\
&&\quad -f_i\left(\alpha_i x_i',y'\right)f_i\left(\alpha_i x_i,y\right)f_j\left(\alpha_j x_j,y'\right)f_j\left(\alpha_j x_j',y\right)\biggr)g(y)g\left(y'\right){\rm d}y{\rm d}y'\\
&&=\int\!\!\int_{y<y'}\prod_{\substack{k=1\\i\neq k\neq j}}^{d}\left(f_k\left(x_k,y\right)f_k\left(x_k,y'\right)\right)\left(f_i\left(\alpha_i x_i,y\right)
f_i\left(\alpha_i x_i',y'\right)-f_i\left(\alpha_i x_i',y\right)f_i\left(\alpha_i x_i,y'\right)\right)\\
&&\quad\cdot\left(f_j\left(\alpha_j x_j,y\right)f_j\left(\alpha_j x_j',y'\right)-f_j\left(\alpha_j x_j',y\right)f_j\left(\alpha_j x_j,y'\right)\right)g(y)g\left(y'\right){\rm d}y{\rm d}y'\ge 0
\end{eqnarray*}
since $\left(X_l,Y\right)$ is MTPP$_2\left(\alpha_l,1\right)$ for $l=i,j$; therefore, ${\bf X}$ is MTPP$_2(\alpha)$, which completes the proof.
\end{proof}

\section{Dependence of order statistics}\label{sec:order}
In this section, we establish certain types of dependence ---both for some of those previously defined and some known ones that we recall in the section--- for order statistics. We begin by recalling some concepts related to order statistics. We refer to \cite{Arnold2008,David2003} and the references therein for an overview.

Let $X_1, X_2,\ldots, X_d$ be independent and identically distributed random variables, with density function $f$ and distribution function $F$, and $X_{(i)}$ denotes the $i$-th order statistic, being $X_{(1)}=\min\left\{X_1,X_2,\ldots,X_d\right\}$ and $X_{(d)}=\max\left\{X_1,X_2,\ldots,X_d\right\}$. Observe that, for $1\le i< j\le d$, the joint density function of the random pair $\left(X_{(i)},X_{(j)}\right)$ is given by
\begin{equation}\label{eq:density}
h_{i,j}\left(x_i,x_j\right)=\left\{\begin{array}{lll} \displaystyle
m_{i,j}[F(x_i)]^{i-1}[F(x_j)-F(x_i)]^{j-i-1}[1-F(x_j)]^{d-j}f(x_i)f(x_j), & \mbox{if\, $x_i\leq x_j$,}\\ \noalign{\smallskip}
0, &\mbox{otherwise,}
\end{array} \right.
\end{equation}
where
$$m_{i,j}=\frac{d!}{(i-1)!(j-i-1)!(d-j)!},$$
and the joint density function of the random vector $\left(X_{(1)},X_{(2)},\ldots,X_{(i)}\right)$, with $2\le i\le d$, is given by
\begin{equation}\label{eq:density2}
h_{1,2,\ldots,i}\left(x_1,x_2,\ldots,x_i\right)=\left\{\begin{array}{lll} \displaystyle
\frac{d!}{(d-i)!}\left[1-F\left(x_i\right)\right]^{d-i}\prod_{k=1}^{i}f\left(x_k\right), &\mbox{if\, $x_1\le x_2\le\cdots\le x_i$,}\\ \noalign{\smallskip}
0, &\mbox{otherwise.}
\end{array} \right.
\end{equation}

Now we recall several known dependence concepts: see \cite{Colangelo2006,Nappo2020,Wei2014} for more details and applications.

\begin{definition}\label{def:failure}Let $X$ be a random variable with distribution function $F$. Then $F$ is said to be {\it decreasing failure rate} ---denoted by {\rm DFR}--- if $\Q[X>x+y|X>x]$ is a nondecreasing function of $x$ for all $y\ge 0$.
\end{definition}
We note that in Definition \ref{def:failure}, denoting ${\overline{F}}(x)=1-F(x)$, we have that $F$ is decreasing failure rate if $\frac{{\overline{F}}(x+y)}{{\overline{F}}(x)}$ is non-decreasing in $x$ for all $y\ge 0$.

\begin{definition}\label{def:si}(\cite{Colangelo2006}) Let $X$ and $Y$ be random variables. $Y$ is {\it positive regression dependent} in $X$ ---denoted by ${\rm PRD}(Y|X)$--- if $\Q[Y>y|X=x]$ is a nondecreasing function of $x$ for all $y\ge 0$.
\end{definition}

A generalization of the PRD-concept for $d$ random variables is the following.

\begin{definition}The random variables $X_1,X_2,\ldots,X_d$ are {\it conditionally increasing in sequence} ---denoted by {\rm CIS}--- if $\Q\left[X_i>x_i|X_1=x_1,\ldots, X_{i-1}=x_{i-1}\right]$ is nondecreasing in $x_1,\ldots,x_{i-1}$, for every $2\le i\le d$, and for all $x_i\in{\overline{\mathbb{R}}}$.
\end{definition}

In what follows, we provide some results on dependence concepts described in this section and the previous one for order statistics.

\begin{proposition}Let $d$ be a natural number such that $d\ge 2$. For $1\le i\le d$, let $X_{(i)}$ be the $i$-th order statistics of a statistical sample of size $d$ from a {\rm DFR} distribution function $F$, with density function $f$. Then, for every $(i,j)$ such that $1\le i<j\le d$, we have ${\rm PRD}\left(X_{(j)}-X_{(i)}|X_{(i)}\right)$.
\end{proposition}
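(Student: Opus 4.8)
The plan is to verify the definition directly: for each fixed $y\ge 0$ I would show that the conditional survival function $x\mapsto \Q[X_{(j)}-X_{(i)}>y\mid X_{(i)}=x]$ is non-decreasing. The first step is to identify the conditional law of $X_{(j)}$ given $X_{(i)}=x$. Dividing the joint density $h_{i,j}$ in \eqref{eq:density} by the marginal density $h_i$ of $X_{(i)}$, the conditional density $h_{j\mid i}(x_j\mid x_i)=h_{i,j}(x_i,x_j)/h_i(x_i)$ is seen to be exactly the density of the $m$-th order statistic of $n:=d-i$ i.i.d.\ observations drawn from $F$ truncated to $(x_i,\infty)$, that is, from the distribution function $F_{x_i}(t)=(F(t)-F(x_i))/(1-F(x_i))$; here $m:=j-i$, with $1\le m\le n$. (Equivalently, this is the Markov/truncation property of the order statistics of a continuous sample.)

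Second, I would rewrite the conditional tail probability as a binomial tail. Given $X_{(i)}=x$, the event $\{X_{(j)}>x+y\}$ means that at most $m-1$ of the $n$ truncated observations lie in $(x,x+y]$. Writing $q=q(x):=\overline{F}(x+y)/\overline{F}(x)=1-F_x(x+y)$ for the conditional probability that one such observation exceeds $x+y$, the number of observations in $(x,x+y]$ is binomial$(n,1-q)$, so
\[
\Q[X_{(j)}-X_{(i)}>y\mid X_{(i)}=x]=\sum_{k=0}^{m-1}\binom{n}{k}(1-q)^k q^{\,n-k}=:G(q).
\]

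Third, the claim reduces to composing two monotonicities. The function $G$ is the value at $m-1$ of the distribution function of a binomial$(n,1-q)$ variable, and differentiating term by term the interior contributions cancel in pairs, leaving $G'(q)=n\binom{n-1}{m-1}(1-q)^{m-1}q^{\,n-m}\ge 0$; hence $G$ is non-decreasing on $[0,1]$. On the other hand, the hypothesis that $F$ is $\mathrm{DFR}$ is precisely the statement that $q(x)=\overline{F}(x+y)/\overline{F}(x)$ is non-decreasing in $x$ for every fixed $y\ge 0$. Therefore $x\mapsto G(q(x))$ is non-decreasing, which is exactly $\mathrm{PRD}(X_{(j)}-X_{(i)}\mid X_{(i)})$.

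The conceptual content is light once the tail probability has been reduced to the single scalar $q$; the two places requiring care are the identification of the conditional distribution (which I would pin down from \eqref{eq:density} rather than quote, to keep the argument self-contained) and the telescoping evaluation of $G'(q)$, which is the main computational step. I would also check that the degenerate indices are covered by the same formulas: the case $m=1$ gives $G(q)=q^{\,n}$, manifestly non-decreasing in $q\ge 0$, while $j=d$ gives $n-m=0$ and the boundary term $n\binom{n-1}{m-1}(1-q)^{m-1}$, still non-negative.
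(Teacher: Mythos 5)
Your proof is correct and follows essentially the same route as the paper: both reduce the conditional tail probability to a non-decreasing function of the single ratio $q(x)=\overline{F}(x+y)/\overline{F}(x)$ and then invoke the DFR hypothesis to make $q$ monotone in $x$. The only difference is the representation of that function --- you write it as the binomial tail $G(q)=\sum_{k=0}^{m-1}\binom{n}{k}(1-q)^k q^{\,n-k}$ obtained by counting truncated observations, while the paper obtains, via the change of variable $u=\overline{F}(z)/\overline{F}(x)$, the ${\rm Beta}(d-j+1,j-i)$ distribution function evaluated at $q$; these coincide by the standard beta--binomial identity, and your telescoping evaluation of $G'(q)$ recovers exactly that Beta density.
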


\begin{proof}We have to prove that $\Q\left[X_{(j)}-X_{(i)}>y|X_{(i)}=x\right]$ is nondecreasing in $x$ for all $y\ge 0$. For that, note
\begin{eqnarray*}
\Q\left[X_{(j)}-X_{(i)}>y|X_{(i)}=x\right]&=&\int_{x+y}^{+\infty}\frac{(d-i)!}{(j-i-1)!(d-j)!}[F(z)-F(x)]^{j-i-1}
\frac{[1-F(z)]^{d-j}}{[1-F(x)]^{d-i}}f(z)\,{\rm d}z\\
&=&\frac{(d-i)!}{(j-i-1)!(d-j)!}\int_{x+y}^{+\infty}\left[1-\frac{{\overline{F}}(z)}{{\overline{F}}(x)}\right]^{j-i-1}
\left[\frac{{\overline{F}}(z)}{{\overline{F}}(x)}\right]^{d-j}\frac{f(z)}{{\overline{F}}(x)}\,{\rm d}z.
\end{eqnarray*}
With the change of variable $u=\frac{{\overline{F}}(z)}{{\overline{F}}(x)}$ we get
$$\Q\left[X_{(j)}-X_{(i)}>y|X_{(i)}=x\right]=\frac{(d-i)!}{(j-i-1)!(d-j)!}\int_{0}^{\frac{{\overline{F}}(x+y)}{{\overline{F}}(x)}}u^{d-j}(1-u)^{j-i-1}\, {\rm d}u=\Q\left[Z\le\frac{{\overline{F}}(x+y)}{{\overline{F}}(x)}\right],$$
where $Z$ is the random variable with Beta distribution $\beta(d-j+1,j-i)$. Since $F$ is {\rm DFR}, then given $x,x'$ in $\mathbb{\mathbb{R}}$, with $x<x'$, we have
$$\frac{{\overline{F}}(x+y)}{{\overline{F}}(x)}\le \frac{{\overline{F}}\left(x'+y\right)}{{\overline{F}}\left(x'\right)}$$
for all $y\ge 0$. Therefore, it is easy to conclude ${\rm PRD}\left(X_{(j)}-X_{(i)}|X_{(i)}\right)$.
\end{proof}

\begin{proposition}For $k=1,2,\ldots,d$, let $X_{(k)}$ be the $k-$th order statistic of a statistical sample of size $d$ from a distribution function $F$. Then we have ${\rm PLRD}\left(X_{(i)},X_{(j)}\right)$ for every $(i,j)$ such that $1\le i,j\le d$.
\end{proposition}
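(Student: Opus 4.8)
The plan is to prove ${\rm PLRD}\left(X_{(i)},X_{(j)}\right)$ by showing that the joint density $h_{i,j}$ in \eqref{eq:density} is TP$_2$, i.e.\ that for $x_i\le x_i'$ and $x_j\le x_j'$ we have
$$h_{i,j}(x_i',x_j')\,h_{i,j}(x_i,x_j)\ge h_{i,j}(x_i',x_j)\,h_{i,j}(x_i,x_j').$$
Since $h_{i,j}$ is supported on $\{x_i\le x_j\}$, the first step is to handle the support: I would verify that the inequality holds trivially (with a $0$ on the left) whenever one of the four arguments violates the ordering constraint, so that the only substantive case is $x_i\le x_i'\le x_j\le x_j'$ together with $x_i'\le x_j$ (the remaining orderings either force a factor to vanish on the larger side or collapse to an equality). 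This reduces the claim to the interior region where all four evaluations are given by the nonzero branch of \eqref{eq:density}.

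\textbf{Next I would} substitute the explicit form and let the multiplicative constant $m_{i,j}$ and the marginal factors cancel. Writing the product $h_{i,j}(x_i',x_j')h_{i,j}(x_i,x_j)$ against $h_{i,j}(x_i',x_j)h_{i,j}(x_i,x_j')$, the terms $[F(\cdot)]^{i-1}$, $[1-F(\cdot)]^{d-j}$, $f(x_i)f(x_j)$ and $m_{i,j}^2$ appear identically on both sides and cancel. What survives is exactly the comparison of the ``middle'' factors
$$\bigl[F(x_j')-F(x_i')\bigr]^{j-i-1}\bigl[F(x_j)-F(x_i)\bigr]^{j-i-1}\ \ge\ \bigl[F(x_j)-F(x_i')\bigr]^{j-i-1}\bigl[F(x_j')-F(x_i)\bigr]^{j-i-1}.$$
Setting $g(a,b)=F(b)-F(a)$ for $a\le b$, this is the statement that $g$ is TP$_2$ (raised to the fixed nonnegative power $j-i-1$), which I would reduce to showing $g(a,b)$ itself is TP$_2$ in $(-a,b)$, i.e.\ log-supermodular on the ordered region.

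\textbf{The key computational step} is therefore to prove that $\bigl(F(x_j')-F(x_i')\bigr)\bigl(F(x_j)-F(x_i)\bigr)\ge\bigl(F(x_j)-F(x_i')\bigr)\bigl(F(x_j')-F(x_i)\bigr)$ for $x_i\le x_i'\le x_j\le x_j'$. Expanding both sides, the cross terms $F(x_i)F(x_j)$, $F(x_i')F(x_j')$ etc.\ rearrange so that the difference equals $\bigl(F(x_i')-F(x_i)\bigr)\bigl(F(x_j')-F(x_j)\bigr)$, which is a product of two nonnegative quantities because $F$ is nondecreasing; hence the inequality holds. \textbf{The one subtlety to watch} is the boundary behaviour of the exponent $j-i-1$: when $j=i+1$ this exponent is $0$ and the middle factors are identically $1$, so the inequality holds as an equality with no monotonicity needed, while for $j>i+1$ the base inequality just established propagates through the fixed nonnegative power since $t\mapsto t^{\,j-i-1}$ is nondecreasing on $[0,\infty[$.

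\textbf{The main obstacle} I anticipate is purely bookkeeping rather than conceptual: organizing the support/ordering cases in the first step cleanly, and confirming that every off-diagonal ordering of $(x_i,x_i',x_j,x_j')$ either forces a vanishing factor on the dominant side or reduces to a case already covered, so that no configuration is missed. Once the support analysis is complete, the algebraic core collapses to the elementary factorization above, and the TP$_2$ property of $h_{i,j}$ — hence ${\rm PLRD}\left(X_{(i)},X_{(j)}\right)$ for all $1\le i,j\le d$ (the case $i=j$ and the symmetry $i>j$ being immediate) — follows directly.
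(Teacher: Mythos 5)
Your proposal is correct and follows essentially the same route as the paper's proof: cancel the common factors $m_{i,j}^2\,[F(\cdot)]^{i-1}[1-F(\cdot)]^{d-j}f(\cdot)$, reduce the TP$_2$ inequality to the middle factors $[F(\cdot)-F(\cdot)]^{j-i-1}$, and observe that the base-case difference factors as $\bigl(F(x_i')-F(x_i)\bigr)\bigl(F(x_j')-F(x_j)\bigr)\ge 0$. Your explicit handling of the support region $\{x_i\le x_j\}$ and of the exponent $j-i-1$ is a small extra care the paper leaves implicit, not a different argument.
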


\begin{proof}Assume ---without loss of generality--- $i<j$. Let $h_{i,j}$ be the joint density function of the random vector $\left(X_{(i)},X_{(j)}\right)$ given by \eqref{eq:density}. For any  $x, x', y, y ' \in \overline{\mathbb{R}}$ such that  $x\leq x',  y \leq y'$,  we have
\begin{eqnarray*}
&&h_{i,j}\left(x',y'\right)h_{i,j}(x,y)-h_{i,j}\left(x',y\right)h_{i,j}\left(x,y'\right)=\left(\frac{d!}{(i-1)!(j-i-1)!(d-j)!}\right)^2[F(x)]^{i-1}[F(x')]^{i-1}\\
&&\quad\times [1-F(y)]^{d-j}\left[1-F\left(y'\right)\right]^{d-j}f(x)f\left(x'\right)f(y)f\left(y'\right)\\
&&\quad\times\left\{\left[\left(F(y)-F(x)\right)\left(F\left(y'\right)-F\left(x'\right)\right)\right]^{j-i-1}-
\left[\left(F\left(y'\right)-F(x)\right)\left(F(y)-F\left(x'\right)\right)\right]^{j-i-1}\right\},
\end{eqnarray*}
whence the proof reduces to proving
$$\left(F(y)-F(x)\right)\left(F\left(y'\right)-F\left(x'\right)\right)\ge \left(F\left(y'\right)-F(x)\right)\left(F(y)-F\left(x'\right)\right)$$
which, it reduces in turn to
$$\left(F\left(x'\right)-F(x)\right)\left(F\left(y'\right)-F(y)\right)\ge 0;$$
but this obviously holds since $x\leq x'$ and $y\leq y'$, which completes the proof.
\end{proof}

\begin{proposition} The order statistics $X_{(1)}, X_{(2)},..., X_{(d)}$, of an statistical sample of size $d$ from a distribution function $F$, are always conditionally increasing in sequence (CIS).
\end{proposition}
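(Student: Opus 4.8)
The plan is to reduce the conditioning on the whole initial segment $X_{(1)}=x_1,\ldots,X_{(i-1)}=x_{i-1}$ to conditioning on $X_{(i-1)}=x_{i-1}$ alone (the Markov property of order statistics), and then to verify the CIS inequality from an explicit closed form. Concretely, for every $2\le i\le d$ I would obtain the conditional density of $X_{(i)}$ given $\big(X_{(1)},\ldots,X_{(i-1)}\big)=(x_1,\ldots,x_{i-1})$ as the quotient $h_{1,\ldots,i}/h_{1,\ldots,i-1}$ of the joint densities supplied by \eqref{eq:density2} (the denominator being the same formula with $i$ replaced by $i-1$). After the products $\prod_k f(x_k)$ and the factorial constants cancel, this quotient becomes, for $x_i\ge x_{i-1}$,
$$(d-i+1)\,\frac{[1-F(x_i)]^{d-i}}{[1-F(x_{i-1})]^{d-i+1}}\,f(x_i),$$
which depends on the conditioning values only through $x_{i-1}$; this is exactly the Markov reduction in concrete form.

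Next I would integrate this conditional density over $(x_i,+\infty)$, using the substitution $u=1-F(z)$ to make the integral elementary, obtaining for $x_i\ge x_{i-1}$
$$\Q\big[X_{(i)}>x_i\,\big|\,X_{(1)}=x_1,\ldots,X_{(i-1)}=x_{i-1}\big]=\left(\frac{1-F(x_i)}{1-F(x_{i-1})}\right)^{d-i+1}.$$
For $x_i<x_{i-1}$ the same conditional probability equals $1$, since $X_{(i)}\ge X_{(i-1)}=x_{i-1}>x_i$ almost surely.

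It then remains to check that this conditional survival probability is nondecreasing in $(x_1,\ldots,x_{i-1})$. Because the expression is constant in $x_1,\ldots,x_{i-2}$, only the dependence on $x_{i-1}$ matters. For $x_i$ fixed, the value is $1$ on the region $x_{i-1}>x_i$ and equals $\big((1-F(x_i))/(1-F(x_{i-1}))\big)^{d-i+1}\le 1$ on the region $x_{i-1}\le x_i$; on the latter it is nondecreasing in $x_{i-1}$ because $1-F$ is nonincreasing, so raising $x_{i-1}$ shrinks the denominator. The two pieces agree (both equal $1$) at the threshold $x_{i-1}=x_i$ by continuity of $F$, so the function is globally nondecreasing in $x_{i-1}$, which is precisely the CIS condition.

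The calculations are routine; the only steps requiring genuine care are the Markov reduction itself --- justifying that the conditional law of $X_{(i)}$ forgets $x_1,\ldots,x_{i-2}$ --- and the correct matching of the two regimes $x_i\le x_{i-1}$ and $x_i\ge x_{i-1}$ at the boundary, where continuity of $F$ (i.e. the existence of the density $f$) is what guarantees monotonicity across the threshold rather than a downward jump.
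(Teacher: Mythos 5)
Your proposal is correct and follows essentially the same route as the paper: both compute the conditional survival function as the integral of the quotient $h_{1,\ldots,i}/h_{1,\ldots,i-1}$ from \eqref{eq:density2}, arrive at $\left[\frac{1-F(x)}{1-F(x_{i-1})}\right]^{d-i+1}$ for $x\ge x_{i-1}$ (and $1$ otherwise), and conclude by noting this depends only on $x_{i-1}$ and is nondecreasing in it. Your extra care about matching the two regimes at the threshold $x_{i-1}=x_i$ is a sound refinement of the same argument.
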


\begin{proof}Let $2\le i\le d$. Since the joint density function of $\left(X_{(1)},X_{(2)},\ldots,X_{(i)}\right)$ is given by \eqref{eq:density2}, then we have
\begin{eqnarray*}
\Q\left[X_{(i)}>x|\bigcap_{j=1}^{i-1}\left(X_{(j)}=x_j\right)\right]&=&\int_{x}^{+\infty}
\frac{h_{1,2,\ldots,i}\left(x_1,x_2,\ldots,x_i\right)}{h_{1,2,\ldots,i-1}\left(x_1,x_2,\ldots,x_{i-1}\right)}\,{\rm d}x_i\\
&=&\left\{\begin{array}{lll} \displaystyle
\left[\frac{1-F(x)}{1-F\left(x_{i-1}\right)}\right]^{d-i+1}, &\mbox{if\, $x\ge x_{i-1}$,}\\ \noalign{\smallskip}
1, &\mbox{if\, $x<x_{i-1}$.}
\end{array} \right.
\end{eqnarray*}
Therefore, for every $x_{i-1},x_{i-1}'\in{\overline{\mathbb{R}}}$, with $x_{i-1}\le x_{i-1}'$, since $1-F\left(x_{i-1}\right)\ge 1-F\left(x_{i-1}'\right)$, we have
$$\left(\frac{1-F(x)}{1-F\left(x_{i-1}\right)}\right)^{d-i+1}\le \left(\frac{1-F(x)}{1-F\left(x_{i-1}'\right)}\right)^{d-i+1},$$
whence it easily follows the result.
\end{proof}

\begin{proposition}For $1\le i\le d$, let $X_{(i)}$ be the $i$-th order statistic of a statistical sample of size $d$ from a {\rm DFR} distribution $F$. Let $D_1=X_{(1)}$ and $D_i=X_{(i)}-X_{(i-1)}$ for $i=2,\ldots,d$. Then the random variables $D_{(1)}, D_{(2)},..., D_{(d)}$ are CIS.
\end{proposition}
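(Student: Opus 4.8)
The plan is to reduce the conditional survival function of the spacings $D_1,\ldots,D_d$ to the conditional survival function of the order statistics already obtained in the preceding proposition, and then to invoke the DFR hypothesis. First I would note that the map $(D_1,\ldots,D_d)\mapsto\left(X_{(1)},\ldots,X_{(d)}\right)$ determined by $X_{(k)}=\sum_{l=1}^{k}D_l$ is a bijection (it is lower triangular with unit Jacobian), so conditioning on $\{D_1=d_1,\ldots,D_{i-1}=d_{i-1}\}$ is the same as conditioning on $\left\{X_{(1)}=d_1,\,X_{(2)}=d_1+d_2,\,\ldots,\,X_{(i-1)}=s\right\}$, where I write $s:=\sum_{l=1}^{i-1}d_l$ for the conditioning value of $X_{(i-1)}$. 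For $t\ge 0$ the event $\{D_i>t\}$ coincides with $\{X_{(i)}>s+t\}$, and since $s+t\ge s$ I may apply the conditional survival formula of the preceding proposition (with threshold $s+t$ and conditioning value $s$) to obtain
$$\Q\left[D_i>t\,\Big|\,D_1=d_1,\ldots,D_{i-1}=d_{i-1}\right]=\left[\frac{\overline{F}(s+t)}{\overline{F}(s)}\right]^{d-i+1},\qquad s=\sum_{l=1}^{i-1}d_l.$$

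For $t<0$ the conditional probability equals $1$, because the spacings $D_2,\ldots,D_d$ are nonnegative; being constant, it is trivially nondecreasing in $(d_1,\ldots,d_{i-1})$, so only the case $t\ge 0$ requires attention. The crucial observation is that the displayed expression depends on $(d_1,\ldots,d_{i-1})$ only through the single quantity $s=\sum_{l=1}^{i-1}d_l$, in which each $d_l$ enters additively; consequently monotonicity in $s$ is equivalent to monotonicity in each coordinate $d_l$ separately. Since $F$ is DFR, the ratio $\overline{F}(s+t)/\overline{F}(s)$ is nondecreasing in $s$ for every fixed $t\ge 0$ (recall Definition \ref{def:failure}), and raising a nonnegative nondecreasing function to the positive power $d-i+1\ge 1$ preserves this monotonicity. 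Hence the conditional survival function is nondecreasing in each $d_l$, $1\le l\le i-1$, for every $2\le i\le d$, which is precisely the CIS property.

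The argument is essentially routine once the Markov structure of the order statistics is exploited; the only step that deserves care is the reduction to the single variable $s$, that is, the fact that the conditional law of $D_i$ given the earlier spacings depends on them solely through $X_{(i-1)}=\sum_{l=1}^{i-1}D_l$. This is the \emph{Markov property} of order statistics, and it is already encoded in the form of the conditional survival function derived in the previous proposition (whose right-hand side involves only $x_{i-1}$), so I do not anticipate a genuine obstacle beyond bookkeeping.
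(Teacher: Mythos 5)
Your proposal is correct and follows essentially the same route as the paper: both pass from the spacings to the order statistics via the unit-Jacobian triangular change of variables, obtain $\Q\left[D_i>t\mid D_1=d_1,\ldots,D_{i-1}=d_{i-1}\right]=\left[\overline{F}(s+t)/\overline{F}(s)\right]^{d-i+1}$ with $s=\sum_{l=1}^{i-1}d_l$ (the paper recomputes the density ratio directly, while you reuse the formula from the preceding proposition), and conclude by the DFR property together with the observation that the $d_l$ enter only through their sum.
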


\begin{proof}Given $2\le i\le d$, let $g_i$ denote the joint density function of $\left(D_1,D_2,\ldots,D_i\right)$. Then we have
$$g_i\left(x_1,x_2,\ldots,x_i\right)=h_{1,2\ldots,i}\left(y_1,y_2,\ldots,y_i\right),$$
where $h_{1,2,\ldots,i}$ is the density of $\left(X_{(1)},X_{(2)},\ldots,X_{(i)}\right)$ given by \eqref{eq:density2}, $y_1=x_1$ and $y_j=\sum_{k=1}^{j}x_k$ for $j=2,\ldots,i$. Therefore, for $x\ge 0$ we get
\begin{eqnarray*}
\Q\left[D_{i}>x|\bigcap_{j=1}^{i-1}\left(D_{j}=x_j\right)\right]&=&\int_{x}^{+\infty}
\frac{h_{1,2,\ldots,i}\left(x_1,x_1+x_2,\ldots,\sum_{j=1}^{i}x_j\right)}{h_{1,2,\ldots,i-1}\left(x_1,x_1+x_2,\ldots,\sum_{j=1}^{i-1}x_{j}\right)}\,{\rm d}x_i\\
&=&
\left[\frac{1-F\left(\sum_{j=1}^{i-1}x_j+x\right)}{1-F\left(\sum_{j=1}^{i-1}x_{j}\right)}\right]^{d-i+1},
\end{eqnarray*}
and $\Q\left[D_{i}>x|\bigcap_{j=1}^{i-1}\left(D_{j}=x_j\right)\right]=1$, when $x<0$.\\

Since $F$ is DFR, given $x_1,x_2,\ldots,x_{i-1},x_1',x_2',\ldots,x_{i-1}'\in {\overline{\mathbb{R}}}$ such that $x_j\le x_j'$ for all $j=1,2,\ldots,i-1$, we have $\sum_{j=1}^{i-1}x_j\le\sum_{j=1}^{i-1}x_j'$ and thus
$$\frac{{\overline{F}}\left(\sum_{j=1}^{i-1}x_j+x\right)}{{\overline{F}}\left(\sum_{j=1}^{i-1}x_j\right)}\le \frac{{\overline{F}}\left(\sum_{j=1}^{i-1}x_j'+x\right)}{{\overline{F}}\left(\sum_{j=1}^{i-1}x_j'\right)},$$
and hence
$$\left(\frac{1-F\left(\sum_{j=1}^{i-1}x_j+x\right)}{1-F\left(\sum_{j=1}^{i-1}x_j\right)}\right)^{d-i+1}\le \left(\frac{1-F\left(\sum_{j=1}^{i-1}x_j'+x\right)}{1-F\left(\sum_{j=1}^{i-1}x_j'\right)}\right)^{d-i+1},$$
whence the result follows.
\end{proof}

\section{Conclusions}\label{sec:con}
We have defined two multivariate generalizations of the TP$_2$-concept: Total positivity according to a direction and in pairs. The equivalence of these two multivariate dependence concepts and several key properties have been established. Moreover, specific dependencies among the order statistics of a sample from a distribution function have been identified. Future work will explore additional dependence concepts in multivariate settings.

\section*{Use of AI tools declaration}
The authors declare that they have not used Artificial Intelligence (AI) tools in the creation of this article.

\section*{Acknowledgments}
The authors acknowledge the comments of the reviewers, the support of the program ERDF-Andaluc\'ia 2014-2020 (University of Almer\'ia) under research project UAL2020-AGR-B1783 and project PID2021-122657OB-I00 by the Ministry of Science and Innovation (Spain). The first author is also partially supported by the CDTIME of the University of Almer\'ia.

\section*{Conflict of interest}
The authors declare that they have no known competing financial interests or personal relationships that could have appeared to
influence the work reported in this paper.



\end{document}